\def\LC{\mathcal{L}}
\def\FC{\mathcal{F}}
\def\HC{\mathcal{H}}
\def\E{\mathbf{E}}
\def\P{\mathbf{P}}
\def\1{\mathbf{1}}
\def\e{\mathbf{e}}
\def\tr{\rm{tr}}
\def\pa{\partial}
\def\de{\delta}
\def\ga{\gamma}
\newtheorem{prop}{Proposition}[section]
\newtheorem{theorem}{Theorem}[section]
\newtheorem{lemma}{Lemma}[section]
\newtheorem{remark}{Remark}
\newcommand{\si}{\sigma}
\begin{document}
\title{On quantum Stochastic Master equations
}

\author{Vassili N. Kolokoltsov\thanks{Faculty of Computation Mathematics and Cybernetics,
 Moscow State University, 119991 Moscow, Russia, Higher School of Economics, Russia,
 Professor Emeritus of the University of Warwick,
  Email: v.n.kolokoltsov@gmail.com and v.kolokoltsov@warwick.ac.uk}}
\maketitle

\begin{abstract}
Stochastic Master equations or quantum filtering equations for mixed states are well known objects in quantum physics.
Building a mathematically rigorous theory of these equations in infinite-dimensional spaces is a long standing open problem.
The first objective of this paper is to give a solution to this problem under the assumption
 of bounded operators providing coupling with environment (or a measurement devise). Furthermore,
 recently the author built the theory of the law of large number limit for continuously observed
 interacting quantum particle systems leading to quantum mean-field games. These limits are described
 by certain nontrivial extensions of quantum stochastic master equations that can be looked at as
 infinite-dimensional operator-valued McKean-Vlasov diffusions. The second objective of this paper is
 to provide a well-posedness result for these new class of McKean-Vlasov diffusions.
\end{abstract}

{\bf Key words:} quantum stochastic master equation, stochastic Lindblad equation,
quantum stochastic filtering, unravelling of quantum dynamic semigroups, quantum trajectories,
quantum interacting particle systems, quantum law of large numbers, singular SDEs in Banach spaces,
infinite-dimensional McKean-Vlasov diffusions.

\section{Preliminaries: filtering equations for pure states}

The general theory of quantum non-demolition observation, filtering and resulting
feedback control was built essentially in papers  \cite{Bel87}, \cite{Bel88}, \cite{Bel92}.
A well written review of this theory can be found in \cite{BoutHanJamQuantFilt}.
For an alternative simplified derivations of the main filtering equations
(by-passing the heavy theory of quantum filtering) we refer to \cite{BelKol},
\cite{Pellegrini}, \cite{BarchBel}, \cite{Holevo91}, \cite{KolQuantFrac} and references therein.
For the technical side of organising feedback quantum control in real time,
see e.g. \cite{Armen02Adaptive}, \cite{Bushev06Adaptive} and \cite{WiMilburnBook}.
Equations of quantum filtering can be also looked at as stochastic master (or Lindblad)
 equation yielding unravelling of quantum dynamic semigroups,
see various kind of interpretations and lots of references in monograph \cite{BarchBook}.

Continuous measurement and quantum filtering of quantum systems can be organised in two versions:
counting and diffusive type detection. In this paper we shall deal only with the latter.
The corresponding dynamics can be described in terms of the stochastic evolution of pure or mixed states.
The mathematics of the evolution of pure states given by the Belavkin equations (and representing some kind
of stochastic nonlinear Schr\"odinger equation) is fairly well understood by now. The present paper
is devoted to the study of a more subtle case of the operator-valued evolution of mixed states.
 Moreover,  recently the author built the theory of the law of large number limit for continuously observed
 interacting quantum particle systems leading to quantum mean-field games, see \cite{KolQuantLLN},
 \cite{KolQuantMFGCount} and \cite{KolQuantMFG}. These limits are described
 by certain nontrivial extensions of quantum stochastic master equations that can be looked at as
 infinite-dimensional operator-valued McKean-Vlasov diffusions. The second objective of this paper is
 to provide a well-posedness result for these new class of McKean-Vlasov diffusions.

Let us introduce basic notations and write down the main equations in order
to better explain our findings. For derivations of these equations see references given above.

To speak in a unified way about the so-called output and innovation processes
(that are Brownian motions under different measures) we shall use the common
notion of an Ito process $X(t)$ defined on some filtered probability space
$(\Omega, \FC, \FC_t, \P)$ and given by its stochastic differential
$dX(t)=A(t) dt+\si(t)\, dW(t)$, where $W(t)=(W_1, \cdots, W_n)(t)$ is a standard
$n$-dimensional $\FC_t$-Wiener process (or Brownian motion) and $A(t), \si(t)$ (where $A(t)$
a vector and $\si(t)$ a matrix) are continuous adapted processes
on $(\Omega, \FC, \FC_t, \P)$.  In this particular study it will be convenient
(for more general and at the same time unified formulations)
to use a nonstandard notion of a {\it simple Ito process} to indicate an Ito process
starting from zero with bounded $A(t)$ and with the unit matrix $\si(t)$,
so that it satisfies the Ito product rule in the form $dX_tdX_t=dt$
(in more detailed notation $dX_i(t) dX_j(t)=\de^i_j dt$).

\begin{remark}
By Girsanov's theorem, all simple processes can be turned to a Brownian motion
by a change to an equivalent measure. This fact is crucial for
physical interpretations, but will be used by us only occasionally
and even can be avoided altogether.
\end{remark}

In this paper letters $H$ and $L=(L_1, \cdots, L_n)$ will denote linear operators
in a separable Hilbert space $\HC$. Here $H$ is self-adjoint and referred to as
the Hamiltonian. The vector-valued $L$, assumed to be bounded, describes
the coupling operator with the measurement device. We shall use the notation
$L_S=(L+L^*)/2=(L_{S1}, \cdots, L_{Sn})$ and $L_A=(L-L^*)/2i=(L_{A1}, \cdots, L_{An})$
for symmetric and antisymmetric parts of $L$ with the adjoint $L^*$, and $\|L\|=\sum_j \|L_j\|$.
We shall write $\, Re \, z$ and $Im \, z$ for the real and imaginary parts of a complex number
or vector. The brackets $[A,B]$ and $\{A,B\}$
will denote the commutator and anti-commutator of operators $A,B$. By $\|A\|$ we denote
the standard operator norm of a bounded operator$A$ in $\HC$. The norms in other spaces of operators,
like spaces of Hilbert-Schmidt operators, will be marked by lower index indicating the corresponding space.

\begin{remark}
Everything given below have a straightforward extension to the case of time dependent families $L(t)$, as
long as these families are measurable and uniformly bounded. In fact, time dependent families arise necessarily
when reducing the case of unbounded $H$ to the case of bounded (in fact, vanishing) $H$ via the interaction
 representation, see below. If $L$ is time-dependent, all estimates below are valid with $\|L\|=\max_t\|L(t)\|$.
Similarly the theory extends to time-dependent Hamiltonian $H_t$ whenever such family generates a well defined
unitary propagator.
\end{remark}

The quantum filtering equation  describing the stochastic evolution of pure states
(vectors in a Hilbert space $\HC$) under continuous measurement
of a diffusive type can be written in two equivalent ways:

(i) as the {\it linear Belavkin quantum filtering equation} for a non-normalized state:
\begin{equation}
\label{eqqufiBlin}
d\chi(t) =-[iH\chi(t) +\frac12 \sum_j L_j^*L_j \chi(t) ]\,dt+\sum_jL_j\chi(t) dY_j(t),
\end{equation}
where $\chi(t)\in \HC$ and $Y(t)$
is a simple $n$-dimensional Ito process referred to as the {\it output process};
for bounded $H,L$, equation \eqref{eqqufiBlin} is clearly well-posed, as a standard linear Ito's equation
in a Hilbert space with bounded coefficients;

(ii) as the {\it nonlinear Belavkin quantum filtering equation}
 for the normalized state $\phi(t)$:
\[
d\phi(t)=\sum_j(L_j-(\phi(t), L_{Sj} \phi(t)))\phi(t) \, dB_j(t)
\]
\begin{equation}
\label{eqqufiBnonlin}
-[i(H-\sum_j(\phi(t), L_{Sj} \phi(t)) L_{Aj})
+\frac12\sum_j (L_j-(\phi(t), L_{Sj} \phi(t)))^*(L-(\phi(t), L_{Sj} \phi(t)))]\phi(t) \, dt,
\end{equation}
where $B_t$ is a simple $n$-dimensional Ito process, referred to as the {\it innovation process}.

The link between these equations will be explained below.

\begin{remark} The square norm $\|\chi\|^2$ of solutions to \eqref{eqqufiBlin} has physical meaning
analogous to the square norm of the solutions to the standard Schr\"odinger equation:
it describes the probability density of observing corresponding values of the output process $Y(t)$,
see detailed discussion in  \cite{BarchBook}.
\end{remark}

If one agrees to understand all products of operator expressions as appropriate inner products
(sum over available indices), for instance, writing $L^*L$ instead of $\sum_j L_j^* L_j$,
and $L \chi \, dY(t)$ instead of $\sum_j L_j\chi \, dY_j(t)$,
 one can write all equations above in a simpler form,
say the main equations \eqref{eqqufiBlin} and \eqref{eqqufiBnonlin} will look like
\begin{equation}
\label{eqqufiBlins}
d\chi(t) =-[iH\chi(t) +\frac12 L^*L \chi(t) ]\,dt+L \chi(t) dY(t),
\end{equation}
and, respectively,
\[
d\phi(t)=-[i(H-(\phi(t), L_S \phi(t)) L_A)
+\frac12 (L-(\phi(t), L_S \phi(t)))^*(L-(\phi(t), L_S \phi(t)))]\phi(t) \, dt
\]
\begin{equation}
\label{eqqufiBnonlins}
+ (L-(\phi(t), L_S \phi(t)))\phi(t) \, dB(t).
\end{equation}
We will mostly use this short way of writing having in mind more detailed versions above.

Recall that the density matrix or density operator $\ga$ corresponding to a unit vector
$\chi\in \HC$ is defined as the orthogonal projection operator on $\chi$.
This operator is usually expressed either as the tensor product  $\ga=\chi\otimes \bar \chi$
(with the usual identification of $H\otimes H$ with the space of Hilbert-Schmidt operators in $\HC$)
or in the most common for physics bra-ket Dirac's notation as $\ga=|\chi\rangle \langle \chi|$.

As one checks by direct application of Ito's formula, (i) if $\chi(t)\in \HC$
satisfies \eqref{eqqufiBlin}, then the corresponding operator $\ga=\chi\otimes \bar \chi$
satisfies the {\it linear stochastic quantum master equation}
\begin{equation}
\label{Lindstoch}
d\ga(t)=-i[H,\ga(t)] \, dt +\LC_L \ga(t) \, dt +(L\ga(t)+\ga(t) L^*) dY(t),
\end{equation}
with
\[
\LC_L\ga =L\ga L^*-\frac12 L^*L\ga -\frac12 \ga L^*L
=L\ga L^*-\frac12 \{L^*L,\ga\};
\]
and (ii) if $\phi(t)$ satisfies \eqref{eqqufiBnonlin}, then the corresponding matrices
$\rho=\phi\otimes \bar \phi$ satisfies the {\it nonlinear stochastic quantum master equation}
\begin{equation}
\label{Lindstochnorm1}
d\rho(t)=-i[H,\rho(t)]\, dt+\LC_L \rho (t)\, dt
+[L\rho(t)+\rho(t) L^*-\rho(t)\, {\tr} \, (L\rho(t)+\rho(t) L^*) ] dB(t).
\end{equation}

These master equations and their nonlinear extensions for interacting particle systems are the main
objects of study in this paper. They describe quantum evolutions under continuous observation
in terms of mixed states (positive operators of unit trace) not necessarily arising from
pure states, that is, not necessarily of form $\chi \otimes \bar \chi$.

Our first main result concern the well-posedness of equation \eqref{Lindstochnorm1}
(or its mild version in case of unbounded $H$) for bounded operator $L$. This includes existence and
uniqueness of solutions and continuous dependence on parameter such as operator $H$. As a tool for proving it,
we show that this equation can be rewritten in an equivalent way as an equation for pure states in an
appropriately chosen Hilbert space.  Secondly, we extend this result to more general equations providing
the law of large numbers for continuously observed interacting quantum particle systems.

The paper is organised a follows.

Section \ref{secpureeq} is a warm-up, where we collect some auxiliary facts
on the equations for pure states. Results here are mostly known, but we stress certain details
that are important for further development.

Section \ref{seclineq} is devoted to the linear SDE \eqref{Lindstoch}. It makes an important first
step for tackling nonlinear equation \eqref{Lindstochnorm1}. The main point here is to prove
the preservation of positivity of the solutions and some bounds for their traces.

Section \ref{secnormeq} presents our first main result. We show the well-posedness
of equation \eqref{Lindstochnorm1} (or its mild extension in case of unbounded $H$),
including continuous dependence on the Hamiltonian $H$.

In Section \ref{secMFeq} our second main result is obtained. We prove well-posesdness
for infinite-dimensional operator-valued McKean-Vlasov diffusions obtained as the law
of large number limit of continuously observed quantum particle systems. These equations are obtained from
quantum stochastic master equations for open systems in the way that is analogous to obtaining
standard nonlinear Schr\"odinger equation for quantum particles of closed quantum systems.

In Appendix we derive some simple estimates for traces used in our analysis.
 They are possibly known, but the author did not find an appropriate reference.

\subsection{Dynamics of pure states}
\label{secpureeq}

In this section we collect some (essentially known) facts about
the equations for pure states.
The pure states will be used as auxiliary tools in our analysis of mixed states.

The link between the two descriptions (linear and normalised) is summarised in the
following statement that can be checked by direct application
of classical Ito's lemma.

\begin{prop}
\label{linnorrmpure}
Let $H$ and $L$ be bounded.

(i) If $\chi(t)$ satisfies \eqref{eqqufiBlin},
then $\|\chi(t)\|^2$ satisfies the equations
 \begin{equation}
\label{chisquare}
d\|\chi(t)\|^2=2\sum_j(\chi(t),L_{Sj} \chi(t))dY_j(t),
\end{equation}
 \begin{equation}
\label{chisquare1}
d\frac{1}{\|\chi(t)\|^2}=-\frac{2}{\|\chi(t)\|^2}
\sum_j \frac{(\chi(t),L_{Sj} \chi(t))}{\|\chi(t)\|^2}
\left[dY_j(t)- \frac{(\chi(t),L_{Sj} \chi(t))}{\|\chi(t)\|^2} dt\right],
\end{equation}
 \begin{equation}
\label{chisquare2}
d\|\chi(t)\|=\frac{1}{\|\chi(t)\|}\sum_j(\chi(t),L_{Sj} \chi(t))dY_j(t)
-\frac{1}{2\|\chi\|^3} \sum_j(\chi(t),L_{Sj} \chi(t))^2 \, dt,
\end{equation}
and the normalised states $\phi(t)=\chi(t)/\|\chi(t)\|$ satisfy the
nonlinear equation \eqref{eqqufiBnonlin}, where
\begin{equation}
\label{eqdefinnov}
dB_j(t)=dY_j(t)-2(\phi(t), L_{Sj} \phi(t)) \, dt;
\end{equation}

(ii) Let $\phi(t)$ have unit norms for all $t$ and satisfy the
nonlinear equation \eqref{eqqufiBnonlin}. Define $\|\chi(t)\|^{-2}$ as the solution
(with the initial condition equal to $1$) to the equation
 \begin{equation}
\label{chisquare4}
d\frac{1}{\|\chi(t)\|^2}=-\frac{2}{\|\chi(t)\|^2}
\sum_j (\phi(t),L_{Sj} \phi(t)) dB_j(t),
\end{equation}
which is seen to be identical with \eqref{chisquare1}
when $B$ and $Y$ are linked via \eqref{eqdefinnov} and $\phi(t)=\chi(t)/\|\chi(t)\|$.
 Then the vectors
$\chi(t)=\phi(t) \|\chi(t)\|$ satisfy the linear equation \eqref{eqqufiBlin}.

(iii) The square norm $\|\chi(t)\|^2$ (respectively its inverse) of a solution
to \eqref{eqqufiBlin} is a martingale under
the probability law where $Y(t)$ (resp. $B(t)$) is a Brownian motion. If $B(t)$ is
a Brownian motin and $Y$ is given by \eqref{eqdefinnov}, then
 \begin{equation}
\label{chisquare5}
\E \|\chi(t)\|^2 \le \exp\{4 t \|L\|^2 \} \|\chi_0\|^2,
\end{equation}
where the expectation is with respect to $B(t)$.
\end{prop}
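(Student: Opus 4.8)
The plan is to prove the three assertions of Proposition~\ref{linnorrmpure} by direct computation with It\^o's formula, treating the output process $Y(t)$ as a simple It\^o process so that $dY_i\,dY_j=\de^i_j\,dt$, and then deriving the announced estimate from the martingale property.

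\medskip
\emph{Step 1: the norm equations in (i).} Starting from \eqref{eqqufiBlin}, I would compute $d\|\chi(t)\|^2=d(\chi(t),\chi(t))$ using the It\^o product rule $d(\chi,\chi)=(d\chi,\chi)+(\chi,d\chi)+(d\chi,d\chi)$. The drift contributions are $-\mathrm{Re}\,\big[2i(\chi,H\chi)+(\chi,L^*L\chi)\big]\,dt$; since $H$ is self-adjoint the first term vanishes, and the quadratic-variation term $(L\chi\,dY,L\chi\,dY)=(L\chi,L\chi)\,dt=(\chi,L^*L\chi)\,dt$ exactly cancels it. What survives is the martingale part $(\chi,L\chi)\,dY+(L\chi,\chi)\,dY=2(\chi,L_S\chi)\,dY$, which is \eqref{chisquare}. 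Equation \eqref{chisquare1} then follows by applying It\^o to $f(u)=1/u$ with $u=\|\chi\|^2$, using $f'(u)=-u^{-2}$, $f''(u)=2u^{-3}$ and the quadratic variation $d\langle u\rangle_t=4\sum_j(\chi,L_{Sj}\chi)^2\,dt$; and \eqref{chisquare2} follows similarly from $g(u)=\sqrt u$. For the normalized state I would write $\phi=\chi\|\chi\|^{-1}$ and combine \eqref{eqqufiBlin} with \eqref{chisquare1} (or \eqref{chisquare2}) through the product rule, collecting the $dt$ and $dY$ terms; the cross-variation term $d\chi\cdot d\|\chi\|^{-1}$ is what converts $dY_j$ into $dB_j=dY_j-2(\phi,L_{Sj}\phi)\,dt$ and, after substitution, reproduces \eqref{eqqufiBnonlin} with the advertised innovation \eqref{eqdefinnov}. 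This is bookkeeping-heavy but routine.

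\medskip
\emph{Step 2: the converse, (ii).} Here I would take $\phi(t)$ of unit norm solving \eqref{eqqufiBnonlin}, define $m(t):=\|\chi(t)\|^{-2}$ by \eqref{chisquare4}, and set $\chi(t):=\phi(t)\,m(t)^{-1/2}$. Applying It\^o to the product $\phi\cdot m^{-1/2}$, using \eqref{eqqufiBnonlin} for $d\phi$, \eqref{chisquare4} for $dm$ (and It\^o again for $dm^{-1/2}$), and the cross term $d\phi\cdot dm^{-1/2}$, one must check that all the $\phi$-dependent correction terms — in particular the terms involving $(\phi,L_{Sj}\phi)$ in the drift of \eqref{eqqufiBnonlin} — cancel against the contributions coming from $m$, leaving exactly the linear right-hand side of \eqref{eqqufiBlin} with $dB_j$ turned back into $dY_j$ via \eqref{eqdefinnov}. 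The identity between \eqref{chisquare4} and \eqref{chisquare1} under the substitution $\phi=\chi/\|\chi\|$, $dB_j=dY_j-2(\phi,L_{Sj}\phi)\,dt$ is an immediate algebraic check that I would record first, since it guarantees consistency of the two constructions.

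\medskip
\emph{Step 3: martingale property and the bound (iii).} Under the measure $\P$ in which $Y(t)$ is a Brownian motion, \eqref{chisquare} exhibits $\|\chi(t)\|^2$ as a stochastic integral with integrand $2(\chi,L_S\chi)$; since $H,L$ are bounded the linear equation \eqref{eqqufiBlin} has moments of all orders (standard Gronwall estimate for linear SDEs in Hilbert space), so the integrand is square-integrable and $\|\chi(t)\|^2$ is a genuine martingale — likewise $\|\chi(t)\|^{-2}$ from \eqref{chisquare1}/\eqref{chisquare4} under the measure making $B(t)$ Brownian. For \eqref{chisquare5}, when $B$ is the Brownian motion and $Y$ is given by \eqref{eqdefinnov}, I substitute $dY_j=dB_j+2(\phi,L_{Sj}\phi)\,dt$ into \eqref{chisquare} to get
\[
d\|\chi(t)\|^2=2(\chi,L_S\chi)\,dB+4\sum_j(\chi,L_{Sj}\phi)(\phi,L_{Sj}\phi)\,dt,
\]
and then bound the expectation of the drift: $|(\chi,L_{Sj}\chi)|\le \|L_{Sj}\|\,\|\chi\|^2\le\|L_j\|\,\|\chi\|^2$, so $\E\|\chi(t)\|^2$ satisfies $\tfrac{d}{dt}\E\|\chi(t)\|^2\le 4\|L\|^2\,\E\|\chi(t)\|^2$ (using $\sum_j\|L_j\|=\|L\|$), and Gronwall gives \eqref{chisquare5}.

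\medskip
\emph{Main obstacle.} The computations themselves are mechanical; the real care is needed in Step 1 and Step 2 to keep the non-self-adjoint parts of $L$ straight — the drift of \eqref{eqqufiBnonlin} contains $(L-(\phi,L_S\phi))^*(L-(\phi,L_S\phi))$ rather than a symmetrized expression, and the $L_A$-term $-i(\phi,L_S\phi)L_A$ arises precisely from the imaginary part of the cross-variation correction. Verifying that these pieces assemble correctly (and that the drift is consistent with $\|\phi\|=1$ being preserved, i.e. that applying Step 1 to the $\chi$ produced in Step 2 returns the same $\phi$) is the one place where a sign error would propagate, so I would do that algebra explicitly rather than by appeal to symmetry.
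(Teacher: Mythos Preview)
Your proposal is correct and follows exactly the route the paper indicates: the paper does not spell out a proof of Proposition~\ref{linnorrmpure} at all, merely stating that it ``can be checked by direct application of classical Ito's lemma,'' and your Steps~1--3 supply precisely that computation (including the Gronwall argument for \eqref{chisquare5}). The only minor slip is the typo $(\chi,L_{Sj}\phi)$ in your displayed drift in Step~3, which should read $(\chi,L_{Sj}\chi)$; the subsequent bound is unaffected.
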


From now on we shall use reduced notation mentioned in introduction,
 tacitly assuming summation
over $n$ coordinates of $L$, $Y$ and $B$.

Notice that \eqref{eqqufiBnonlins} is meant to describe evolutions of unit trace.
 It is often convenient
to include it in a more general class of trace-preserving evolutions, the simplest version being
\[
d\phi(t)=-[i(H-\langle L_S \rangle_{\phi(t)} L_A)
+\frac12 (L-\langle L_S \rangle_{\phi(t)})^*(L-\langle L_S \rangle_{\phi(t)})]\phi(t) \, dt
\]
\begin{equation}
\label{eqqufiBnonlinsn}
+ (L-\langle L_S \rangle_{\phi(t)})\phi(t) \, dB(t),
\end{equation}
where we introduced the (rather standard) notation for the value
of an operator $A$ in a pure state $\phi$:
\[
\langle A \rangle_{\phi}=\frac{(\phi, A \phi)}{(\phi, \phi)}.
\]
Clearly for $\phi(t)$ of unit trace solutions to equations \eqref{eqqufiBnonlins}
and \eqref{eqqufiBnonlinsn} coincide, but \eqref{eqqufiBnonlinsn} is explicitly trace-preserving
for arbitrary $\phi(t)$, which is not the case for equation \eqref{eqqufiBnonlins}.

It is also insightful to write down equation for $\chi$ in terms of the innovation process $B$:
\begin{equation}
\label{eqqufiBlinsB}
d\chi(t) =-[iH\chi(t) +\frac12 L^*L \chi(t) ]\,dt
+L \chi(t) \, (dB(t)+\langle L+L^*\rangle_{\chi(t)} \, dt).
\end{equation}

In the most important case of a self-adjoint $L$ equations
\eqref{eqqufiBlins} and \eqref{eqqufiBnonlinsn} simplify to the equations
\begin{equation}
\label{eqqufiBlinss}
d\chi(t) =-[iH\chi(t) +\frac12 L^2 \chi(t) ]\,dt+L\chi(t) dY(t),
\end{equation}
and, respectively,
\begin{equation}
\label{eqqufiBnonlinss}
d\phi(t)=-[iH+\frac12 (L-(\phi(t), L \phi(t)))^2]\phi(t) \, dt
+ (L-(\phi(t), L \phi(t)))\phi(t) \, dB(t).
\end{equation}

Equations \eqref{eqqufiBlins} and \eqref{eqqufiBnonlinsn} may not make sense
if $H$ is unbounded. In the latter case some generalised version can be used. For instance,
one can look at mild forms of the Cauchy problem for these equations, which are
\begin{equation}
\label{eqqufiBlinsm}
\chi(t) =e^{-iH t}\chi_0+\int_0^t e^{-iH(t-s)} [-\frac12 L^*L \chi(s) \,dt+L \chi(s) \, dY(s)],
\end{equation}
and, respectively,
\[
\phi(t)=e^{-iH t}\phi_0+\int_0^t e^{-iH (t-s)} \bigl[ i\langle L_S \rangle_{\phi(s)} L_A \phi(s) \, ds
-\frac12 (L-\langle L_S \rangle_{\phi(s)})^*(L-\langle L_S \rangle_{\phi(s)})]\phi(s) \, ds
\]
\begin{equation}
\label{eqqufiBnonlinsm}
+ (L-\langle L_S \rangle_{\phi(s)})\phi(s) \, dB(s)\bigr].
\end{equation}

It is seen directly that in terms of vectors $\xi(t)=e^{iHt} \chi(t)$ and $\psi(t)=e^{iHt} \phi(t)$,
integral equations \eqref{eqqufiBlinsm} and \eqref{eqqufiBnonlinsm}, are equivalent
 to the Cauchy problems for the SDEs  in the "interaction form"
\begin{equation}
\label{eqqufiBlinsin}
d\xi(t) = -\frac12 (L^{Ht})^*L^{Ht} \xi(t) \,dt+L^{Ht} \xi(t) dY(t),
\end{equation}
and, respectively,
\[
d\psi(t)= i\langle L^{Ht}_S \rangle_{\psi(t)} L^{Ht}_A \psi(t) \, dt
-\frac12 (L^{Ht}-\langle L^{Ht}_S \rangle_{\psi(t)})^*(L^{Ht}-\langle L^{Ht}_S \rangle_{\psi(t)})]\psi(t) \, dt
\]
\begin{equation}
\label{eqqufiBnonlinsin}
+ (L^{Ht}-\langle L^{Ht}_S \rangle_{\psi(t)})\psi(t) \, dB(t),
\end{equation}
where $L^{Ht}$ are obtained from $L$ by "dressing":
\[
L^{Ht}=e^{iHt} L e^{-iHt}.
\]

On the other hand, a direct application of Ito's formula shows that, for bounded operators $H$,
SDEs \eqref{eqqufiBlinsin} and \eqref{eqqufiBnonlinsin} are equivalent to SDEs
\eqref{eqqufiBlins} and \eqref{eqqufiBnonlins}, respectively.
Of course,  for unbounded coefficients solutions
to mild equations may exist that fail to solve the corresponding SDEs.

Well-posedness for equation \eqref{eqqufiBlins}
with bounded $H,L$ and \eqref{eqqufiBlinsm} for bounded $L$ follows from the standard linear theory
and the observation that the r.h.s. are bounded linear operators. Let us note that much
more general equations of these
type with bounded coefficients are treated in detail in \cite{BarchHol}. Nonlinear equations are more subtle.
Possibly the well-posedness for general bounded $L$ was first obtained in \cite{KolQuantLLN} (in \cite{BarchHol}
the existence was proved and it was noted in \cite{MoraRebo} that the uniqueness, even of a weak solution,
was still open for bounded $H,L$ in infinite-dimensional spaces). The proof of the well-posedness
of \eqref{eqqufiBnonlinsm} (and thus \eqref{eqqufiBnonlins}) for bounded $L$ and self-adjoint $H$
in \cite{KolQuantLLN} was based on the extension to infinite-dimensional case of the arguments from \cite{BarchBook}
 given for finite-dimensional case. In fact, this well-posedness follows from the standard existence
 and uniqueness result for SDEs with Lipschitz coefficients and the following observation
 that we formulate as a separate lemma, because we shall use it also later in another context.

 \begin{lemma}
 \label{lemboundder}
 For any bounded operator $M$ in $\HC$, the mapping
 $\HC \to \HC$ given by formulas
 \[
 \psi \to f(\psi)= \langle M \rangle_{\psi} \psi
\]
is differentiable with the derivative mapping
\[
\phi  \to Df_{\psi}(\phi)=\frac{\pa f}{\pa \psi} \phi+ \frac{\pa f}{\pa \bar \psi} \bar \phi
\]
being a bounded linear operator with the norm not exceeding $5\|M\|$.
Consequently, the mapping $f$ is globally Lipschitz.
\end{lemma}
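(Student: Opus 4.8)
The plan is to regard $\HC$ as a real Banach space, compute the Fréchet derivative of $f$ from elementary building blocks, extract the norm bound, and then upgrade it to a global Lipschitz estimate. Write
\[
f(\psi)=\frac{g(\psi)}{h(\psi)}\,\psi,\qquad g(\psi)=(\psi,M\psi)\in\C,\qquad h(\psi)=(\psi,\psi)=\|\psi\|^2>0 .
\]
Both $g$ and $h$ are real-polynomial of degree two, hence smooth, $h$ never vanishes on $\HC\setminus\{0\}$, and multiplication of the vector $\psi$ by the complex scalar $g(\psi)/h(\psi)$ is a real-linear operation. So $f$ is real-Fréchet-differentiable on $\HC\setminus\{0\}$, and the product and quotient rules give, for a direction $\phi\in\HC$,
\[
Df_\psi(\phi)=\frac{Dg_\psi(\phi)\,h(\psi)-g(\psi)\,Dh_\psi(\phi)}{h(\psi)^2}\,\psi+\frac{g(\psi)}{h(\psi)}\,\phi ,
\]
with $Dg_\psi(\phi)=(\psi,M\phi)+(\phi,M\psi)$ and $Dh_\psi(\phi)=(\psi,\phi)+(\phi,\psi)$. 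The summands $(\psi,M\phi)$ and $\frac{g(\psi)}{h(\psi)}\phi$ are complex-linear in $\phi$, while $(\phi,M\psi)$ and the relevant part of $Dh_\psi(\phi)$ are conjugate-linear in $\phi$; collecting the two types recovers exactly the Wirtinger split $Df_\psi(\phi)=\frac{\pa f}{\pa\psi}\phi+\frac{\pa f}{\pa\bar\psi}\bar\phi$ asserted in the statement.

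For the norm bound I would use that $f(t\psi)=tf(\psi)$ for $t>0$, so $Df$ is invariant under $\psi\mapsto t\psi$ and it suffices to estimate $\|Df_\psi\|$ for $\|\psi\|=1$. Then $h(\psi)=1$, $|g(\psi)|\le\|M\|$, $|Dg_\psi(\phi)|\le 2\|M\|\,\|\phi\|$ and $|Dh_\psi(\phi)|\le 2\|\phi\|$. Writing $Df_\psi(\phi)$ as the sum of the three terms $g(\psi)\phi$, $(Dg_\psi(\phi))\psi$ and $-g(\psi)(Dh_\psi(\phi))\psi$, whose norms are bounded by $\|M\|\,\|\phi\|$, $2\|M\|\,\|\phi\|$ and $2\|M\|\,\|\phi\|$ respectively, the triangle inequality yields $\|Df_\psi(\phi)\|\le 5\|M\|\,\|\phi\|$, hence $\|Df_\psi\|\le 5\|M\|$ for every $\psi\neq 0$.

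For the global Lipschitz claim I would first note that $f$ extends continuously to the origin by $f(0)=0$, since $\|f(\psi)\|=|\langle M\rangle_\psi|\,\|\psi\|\le\|M\|\,\|\psi\|$. Given arbitrary $\psi_1,\psi_2$, if the closed segment $[\psi_1,\psi_2]$ avoids the origin it lies entirely in $\HC\setminus\{0\}$, where $f$ is $C^1$ with $\|Df\|\le 5\|M\|$, so the mean value inequality along the segment gives $\|f(\psi_1)-f(\psi_2)\|\le 5\|M\|\,\|\psi_1-\psi_2\|$. The only remaining case is $\psi_2=-t\psi_1$ with $t\ge 0$, and then by homogeneity $f(\psi_1)-f(\psi_2)=(1+t)f(\psi_1)$ while $\|\psi_1-\psi_2\|=(1+t)\|\psi_1\|$, so the ratio equals $|\langle M\rangle_{\psi_1}|\le\|M\|\le 5\|M\|$. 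Hence $f$ is globally Lipschitz on $\HC$ with constant $5\|M\|$.

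The derivative computation and the constant $5$ are routine bookkeeping; the one point that genuinely needs care is the passage from a uniform bound on $Df$ — which only holds on the non-convex set $\HC\setminus\{0\}$ — to the honestly global Lipschitz estimate, which is why the segment/homogeneity dichotomy in the last paragraph is required rather than a bare appeal to the mean value theorem.
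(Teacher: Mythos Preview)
Your argument is correct and follows essentially the same route as the paper: compute the real Fr\'echet derivative (the paper does this in coordinates, you do it coordinate-free via the product/quotient rule) and bound the resulting terms to obtain $5\|M\|$. Your treatment is in fact slightly more careful than the paper's, since you explicitly justify the passage from the derivative bound on $\HC\setminus\{0\}$ to the global Lipschitz estimate by the segment/homogeneity dichotomy, whereas the paper simply asserts the Lipschitz conclusion.
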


\begin{proof}
Writing down the derivatives in some orthonormal coordinates,
\[
\frac{\pa f_i}{\pa \psi_j}=\frac{(\psi, M\psi)}{(\psi, \psi)} \de^i_j
+\psi_i \frac{\overline{(M\psi)_j}}{(\psi, \psi)}- \psi_i \bar \psi_j\frac{(\psi, M\psi)}{(\psi, \psi)^2},
\]
we see that the first term gives the operator proportional to identity and
the second and third terms present Hilbert-Schmidt operators. Each of the
three terms is bounded by $\|M\|$. Similarly,
\[
\frac{\pa f_i}{\pa \bar \psi_j}
=\psi_i \frac{(M\psi)_j}{(\psi, \psi)}- \psi_i  \psi_j\frac{(\psi, M\psi)}{(\psi, \psi)^2},
\]
which is a Hilbert-Schmidt operator bounded by $2\|M\|$.
\end{proof}

A simple, but important observation is that SDEs \eqref{eqqufiBlinsin}
and \eqref{eqqufiBnonlinsin} have the same form as
SDEs \eqref{eqqufiBlins} and \eqref{eqqufiBnonlins} (the former have vanishing $H$ and
time dependent $L$), and therefore the calculations of Proposition \ref{linnorrmpure1}
extend automatically to equations \eqref{eqqufiBlinsin} and \eqref{eqqufiBnonlinsin}.
Moreover,
\[
\|\chi_t\|^2=\|\xi_t \|^2, \quad (\chi_t, L_S \chi_t)=(\xi_t, L_S^{Ht} \xi_t)
\]
leading to the following.

\begin{prop}
\label{linnorrmpure1}
Let $H$ be self-adjoint, but possibly unbounded and $L$ bounded.
All statements of Proposition \ref{linnorrmpure} remain valid literally.
\end{prop}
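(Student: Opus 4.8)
The plan is to reduce Proposition \ref{linnorrmpure1} to Proposition \ref{linnorrmpure} by passing to the interaction representation, exactly as suggested by the two displayed identities $\|\chi_t\|^2 = \|\xi_t\|^2$ and $(\chi_t, L_S\chi_t) = (\xi_t, L_S^{Ht}\xi_t)$ that precede the statement. The key observation is that if $H$ is self-adjoint (possibly unbounded), then $e^{-iHt}$ is a strongly continuous unitary group by Stone's theorem, so the ``dressed'' operators $L^{Ht} = e^{iHt} L e^{-iHt}$ are bounded with $\|L^{Ht}\| = \|L\|$ for every $t$, and the family $t \mapsto L^{Ht}$ is strongly measurable and uniformly bounded. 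By the remark in the introduction, the whole theory (including Proposition \ref{linnorrmpure}, whose proof is purely a matter of applying Ito's formula to bounded-coefficient SDEs) applies verbatim to the interaction-form equations \eqref{eqqufiBlinsin} and \eqref{eqqufiBnonlinsin}, which are of the same type as \eqref{eqqufiBlins} and \eqref{eqqufiBnonlins} but with vanishing Hamiltonian and time-dependent bounded $L^{Ht}$.

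Concretely, I would proceed as follows. First, note that by definition a mild solution $\chi(t)$ of \eqref{eqqufiBlinsm} corresponds, via $\xi(t) = e^{iHt}\chi(t)$, to a genuine (strong) solution of the bounded-coefficient SDE \eqref{eqqufiBlinsin}, and similarly $\psi(t) = e^{iHt}\phi(t)$ solves \eqref{eqqufiBnonlinsin}; this equivalence is precisely what was established in the paragraphs preceding the statement. Second, apply Proposition \ref{linnorrmpure} (in its time-dependent-$L$ form, legitimate by the remark) to the pair $(\xi, \psi)$: this yields \eqref{chisquare}--\eqref{chisquare5} with $\chi$ replaced by $\xi$, $\phi$ replaced by $\psi$, and $L_{Sj}$ replaced by $L_{Sj}^{Ht}$. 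Third, translate every such identity back to $(\chi, \phi)$ using the two invariance identities: unitarity of $e^{-iHt}$ gives $\|\chi(t)\| = \|\xi(t)\|$, and $(\chi(t), L_{Sj}\chi(t)) = (e^{-iHt}\xi(t), L_{Sj} e^{-iHt}\xi(t)) = (\xi(t), e^{iHt}L_{Sj}e^{-iHt}\xi(t)) = (\xi(t), L_{Sj}^{Ht}\xi(t))$, and likewise $\langle L_{Sj}\rangle_{\phi(t)} = \langle L_{Sj}^{Ht}\rangle_{\psi(t)}$. Since the output and innovation processes $Y$ and $B$ are the same in both pictures, each equation of Proposition \ref{linnorrmpure} is thereby recovered verbatim for the mild solutions; in particular, the bound \eqref{chisquare5} survives because $\|L^{Ht}\| = \|L\|$ so the exponent $4t\|L\|^2$ is unchanged.

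The main obstacle — really the only nontrivial point — is making sure the time-dependent-coefficient version of Proposition \ref{linnorrmpure} is actually available and that all the stochastic calculus is justified for the mild (rather than strong) solutions. Here one relies on the remark in the introduction asserting that the theory extends to measurable, uniformly bounded families $L(t)$, and on the fact that the Ito-formula computations behind Proposition \ref{linnorrmpure} only use boundedness of the coefficients at each time, not their time-independence. The measurability of $t \mapsto L^{Ht} = e^{iHt}Le^{-iHt}$ in the strong operator topology follows from strong continuity of the unitary group, so the quadratic-variation and drift terms appearing in \eqref{chisquare}--\eqref{chisquare5} are well-defined adapted processes. Once this is in place the ``literally'' in the statement is justified: no constant in any of the formulas changes, because the only place $H$ could enter — through $\|L^{Ht}\|$ or through the inner products $(\xi, L_S^{Ht}\xi)$ — is neutralized by unitarity. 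Hence the proof is essentially the single sentence ``apply Proposition \ref{linnorrmpure} in the interaction picture and undress,'' with the verification of the time-dependent framework being the substantive content.
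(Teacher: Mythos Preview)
Your proposal is correct and follows exactly the argument the paper gives in the paragraph immediately preceding the statement: pass to the interaction picture, apply Proposition \ref{linnorrmpure} with vanishing Hamiltonian and time-dependent $L^{Ht}$ (legitimate by the remark on time-dependent families), and translate back via the unitarity identities $\|\chi_t\|^2=\|\xi_t\|^2$ and $(\chi_t,L_S\chi_t)=(\xi_t,L_S^{Ht}\xi_t)$. You have in fact written the argument out more carefully than the paper does.
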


To complete our brief review of quantum filtering SDEs for pure states let us note
that there exist many important results on the solutions to some generalised
versions of equations \eqref{eqqufiBnonlinsm} with unbounded $H$ and $L$
under various nontrivial assumptions, see e.g.  \cite{Holevo91}, \cite{Fagnola},
\cite{MoraRebo}, \cite{Mora13}, some of them being inspired by the works on the
conservativity of quantum dynamic semigroups from \cite{ChebFagn} and \cite{ChebQuez}.
For other classes  of stochastic Schr\"odinger equation we can refer to \cite{BarbRock16}
and references therein.

\section{Stochastic Lindblad (or quantum master) equations: linear version}
\label{seclineq}

The first thing to decide for dealing with the equations on mixed states is the choice
of an appropriate Banach space of operators, where the corresponding SDEs will be analysed.

We shall consider these equations in the Hilbert space $\HC^2_s$ of self-adjoint Hilbert-Schmidt
operators in $\HC$, which is a closed subspace
in the space of all Hilbert-Schmidt operators $\HC^2$ with the scalar product ${\tr} (A^*B)$.

Since we are interested in trace-class operators, a more natural
space from physical point of view  would be the Banach space $\HC^1$ of self-adjoint
trace-class operators in $\HC$. However, the classes of Banach spaces,
for which a satisfactory extension of Ito stochastic calculus was developed,
namely the so-called UMD spaces, spaces of martingale type 2 and spaces with a smooth norm
(see review \cite{VanNeerven}) do not include $\HC^1$, and therefore we work in the larger space
$\HC^2$. In this space the key linear functional of taking trace is unbounded,
and we are led to work with SDEs with singular coefficients. This complication
is the price to pay for working in a convenient Hilbert setting of $\HC^2$.

As in the case of pure states, the terms of the equations for mixed states
are well defined for bounded $H$ and $L$.
For the case of unbounded $H$ and bounded $L$ one can naturally use the corresponding mild form
that writes down as
\[
\ga(t)=e^{-iHt}\ga_0e^{iHt}
+\int_0^t e^{-iH(t-s)} \LC_L\ga (s)  e^{iH(t-s)} \, ds
\]
\begin{equation}
\label{Lindstochmild}
 +\int_0^t e^{-iH(t-s)} (L\ga(s)+\ga(s) L^*) e^{iH(t-s)}\, dY(s)
\end{equation}
for equation \eqref{Lindstoch} and as
\[
\rho(t)=e^{-iHt}\rho_0 e^{iHt}
+\int_0^t e^{-iH(t-s)} \LC_L\rho (s)  e^{iH(t-s)} \, ds
\]
\begin{equation}
\label{Lindstochnormmild}
+\int_0^t e^{-iH(t-s)} \bigl[
L\rho(s)+\rho(s) L^*-\rho(s)\, {\tr} \, (L\rho(s)+\rho(s) L^*) \bigr] e^{iH(t-s)} dB(s)
\end{equation}
for equation \eqref{Lindstochnorm1}.

Also insightful are the versions of these equations in "interaction form", which are
\begin{equation}
\label{Lindstochin}
d\nu(t)=\LC_{L^{Ht}} \nu(t) \, dt +(L^{Ht}\nu(t)+\nu(t) (L^{Ht})^*) dY(t),
\end{equation}
for equation \eqref{Lindstochmild}, written in terms of $\nu(t)=e^{-iHt}\ga(t) e^{iHt}$, and
\begin{equation}
\label{Lindstochnormin}
d\mu(t)=\LC_{L^{Ht}} \mu (t)\, dt
+[L^{Ht}\mu(t)+\mu(t) (L^{Ht})^*-\mu(t)\, {\tr} \, (L^{Ht}\mu(t)+\mu(t) (L^{Ht})^*) ] dB(t)
\end{equation}
for equation \eqref{Lindstochnormmild}, written in terms of
$\mu(t)=e^{-iHt}\rho(t) e^{iHt}$. Here
\[
L^{Ht}=e^{iHt} L e^{-iHt},
\]
as above.
The equivalence of \eqref{Lindstochin} (resp. \eqref{Lindstochnormin}) and \eqref{Lindstochmild}
(resp. \eqref{Lindstochnormmild}) is straightforward, but
equations  \eqref{Lindstochin} and \eqref{Lindstochnormin} can be looked at as particular cases
of  \eqref{Lindstoch} and \eqref{Lindstochnorm1}, though with time-dependent coefficients.

\begin{theorem}
\label{LindStochLin1}
Assume, as usual, that $L$ is bounded, $H$ is self-adjoint, and $Y(t)$ is a simple Ito's process.
 Then the following holds.

(i) Equation \eqref{Lindstoch} in case of bounded $H$ and equations
\eqref{Lindstochmild} or \eqref{Lindstochin} in general case are well-posed in $\HC^2_s$, that is,
they have a unique global solution for any $\ga_0\in \HC^2_s$. If $Y(t)$ is a Brownian motion,
then these solutions have the growth estimates
 \begin{equation}
\label{Lindstochquadex1}
\E [{\tr}\, \ga^2(t)] \le {\tr}\, \ga_0^2 \exp\{ 4 t\|L\|^2\}.
\end{equation}

(ii) The solution $\ga (t)$ to  \eqref{Lindstochmild} or \eqref{Lindstoch}
is positive-definite for all $t$ whenever $\ga_0$ so is.

(iii) If the initial condition $\ga_0$ is of trace-class,
then so is the solution $\ga(t)$, with the trace given by the formula
\begin{equation}
\label{Lindstochmart}
{\tr} \,\ga(t)={\tr} \, \ga_0+\int_0^t {\tr} (L\ga(s)+\ga(s) L^*) dY(s).
\end{equation}
If $Y(t)$ is a Brownian motion, then ${\tr} \, \ga(t)$ is a
square integrable martingale such that
\begin{equation}
\label{Lindstochmart1}
\E ({\tr} \,\ga(t))^2\le [({\tr} \, \ga_0^+)^2 +({\tr} \, \ga_0^-)^2]\exp\{ 4t \|L\|^2\},
\end{equation}
where $\ga_0^{\pm}$ denote positive and negative parts of $\ga_0$, and
\begin{equation}
\label{Lindstochmart2}
\E ({\tr} \,|\ga(t)|)\le {\tr} \, |\ga_0|.
\end{equation}

\end{theorem}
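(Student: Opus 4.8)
The plan is to prove (i), (ii), (iii) in that order, using as inputs the pure‑state statements of Propositions \ref{linnorrmpure} and \ref{linnorrmpure1}, the fact recorded above that $\chi\otimes\bar\chi$ solves \eqref{Lindstoch} whenever $\chi$ solves \eqref{eqqufiBlin}, and the elementary trace inequalities of the Appendix. For (i) with bounded $H$, the right‑hand side of \eqref{Lindstoch} is a bounded linear operator on $\HC^2$ applied to $\ga$ (each of $\ga\mapsto-i[H,\ga]$, $\ga\mapsto\LC_L\ga$, $\ga\mapsto L\ga+\ga L^*$ has norm controlled by $\|H\|$ or $\|L\|$), and each piece maps the closed subspace $\HC^2_s$ into itself, so \eqref{Lindstoch} is a linear Ito equation in the Hilbert space $\HC^2_s$ with bounded coefficients and the standard theory gives a unique global solution. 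For unbounded $H$ one solves the interaction form \eqref{Lindstochin} instead: since conjugation by $e^{-iHt}$ is an isometry of $\HC^2$ and $\|L^{Ht}\|=\|L\|$, this is again a linear Ito equation with uniformly bounded, time‑dependent, coefficients, and its unique solution $\nu(t)$ yields $\ga(t)=e^{-iHt}\nu(t)e^{iHt}$ solving \eqref{Lindstochmild}, with uniqueness inherited. The bound \eqref{Lindstochquadex1} follows by applying Ito's formula to $\tr\,\ga^2(t)=\|\ga(t)\|_{\HC^2}^2$: the Hamiltonian drift vanishes by cyclicity of the trace, and Cauchy--Schwarz in $\HC^2$ gives $|\tr(\ga L_j\ga L_j^*)|\le\|L_j\|^2\|\ga\|_{\HC^2}^2$ and $|\tr((L_j\ga)^2)|\le\|L_j\|^2\|\ga\|_{\HC^2}^2$, so that the cancellations built into $\LC_L$ leave a drift for $\|\ga\|_{\HC^2}^2$ bounded by $4\|L\|^2\|\ga\|_{\HC^2}^2$; Gronwall (after localising to kill the $dY$‑martingale) concludes. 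The same computation applied to the difference of two solutions shows that $\ga_0\mapsto\ga(t)$ is Lipschitz from $\HC^2_s$ into $L^2(\Om;\HC^2_s)$, which is used below.

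For (ii), let $\ga_0\ge0$ and fix its spectral decomposition $\ga_0=\sum_k p_k\,e_k\otimes\bar e_k$, $p_k\ge0$, $\{e_k\}$ orthonormal, so the truncations $\ga_0^{(N)}=\sum_{k\le N}p_k\,e_k\otimes\bar e_k$ converge to $\ga_0$ in $\HC^2$. Letting $\chi_k$ solve \eqref{eqqufiBlin} with $\chi_k(0)=e_k$, the link above together with linearity shows that the solution of \eqref{Lindstoch} from $\ga_0^{(N)}$ equals $\ga^{(N)}(t)=\sum_{k\le N}p_k\,\chi_k(t)\otimes\bar\chi_k(t)\ge0$. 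By the Lipschitz dependence on the initial data, $\ga^{(N)}(t)\to\ga(t)$ in $L^2(\Om;\HC^2_s)$, hence along a subsequence almost surely; since the positive cone is closed in $\HC^2_s$ and $t\mapsto\ga(t)$ is continuous, $\ga(t)\ge0$ for all $t$ a.s. The unbounded‑$H$ case is identical, conjugation by the unitary $e^{-iHt}$ preserving positivity; and since equivalent measures share null sets, it is enough to argue under the measure making $Y$ a Brownian motion. I expect this infinite‑dimensional limit — not the underlying finite‑rank conjugation identity itself — to be the main obstacle, together with making rigorous the bilinear Ito computation behind the $\chi\otimes\bar\chi$ link.

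For (iii), first take $\ga_0\ge0$ trace‑class, so $\sum_k p_k=\tr\,\ga_0<\infty$. Each $\ga^{(N)}(t)$ is trace‑class with $\tr\,\ga^{(N)}(t)=\sum_{k\le N}p_k\|\chi_k(t)\|^2$, a quantity increasing in $N$ whose expectation is bounded by $\tr\,\ga_0\exp\{4t\|L\|^2\}$ thanks to $\E\|\chi_k(t)\|^2\le\exp\{4t\|L\|^2\}$ (Proposition \ref{linnorrmpure}(iii)); a monotone‑limit argument for positive operators then shows $\ga(t)$ is trace‑class a.s. with $\tr\,\ga(t)=\sum_k p_k\|\chi_k(t)\|^2$. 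Summing \eqref{chisquare} against $p_k$ — the interchange of $\sum_k$ with the stochastic integral being justified by that moment bound and $\sum_k p_k<\infty$ — and using $\sum_k 2p_k(\chi_k,L_{Sj}\chi_k)=\tr(L_j\ga+\ga L_j^*)$ yields \eqref{Lindstochmart}. For general trace‑class $\ga_0$ write the Jordan decomposition $\ga_0=\ga_0^+-\ga_0^-$; by linearity and uniqueness $\ga(t)=\ga^+(t)-\ga^-(t)$ with $\ga^\pm(t)\ge0$ the trace‑class solutions from $\ga_0^\pm$, giving \eqref{Lindstochmart} in general and $\tr\,|\ga(t)|\le\tr\,\ga^+(t)+\tr\,\ga^-(t)$ since $\tr\,\ga^\pm(t)\ge0$. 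Finally, when $Y$ is a Brownian motion, applying Ito to $(\tr\,\ga^\pm(t))^2$ and using $|\tr(L\ga^\pm+\ga^\pm L^*)|\le2\|L\|\,\tr\,\ga^\pm$ (valid since $\ga^\pm\ge0$) gives $\E(\tr\,\ga^\pm(t))^2\le(\tr\,\ga_0^\pm)^2\exp\{4t\|L\|^2\}$ by Gronwall, and \eqref{Lindstochmart1} follows because $-2\,\tr\,\ga^+(t)\,\tr\,\ga^-(t)\le0$. This square‑integrability makes the stochastic integral in \eqref{Lindstochmart} a true $L^2$‑martingale, so $\E\,\tr\,\ga^\pm(t)=\tr\,\ga_0^\pm$, and the triangle inequality for the trace norm then yields \eqref{Lindstochmart2}.
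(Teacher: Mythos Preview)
Your proposal is correct and follows essentially the same route as the paper: standard well-posedness for linear Hilbert-space SDEs plus Ito on $\tr\,\ga^2$ and Gronwall for (i); spectral truncation of $\ga_0$ and the pure-state link $e_k\otimes\bar e_k\mapsto e_k(t)\otimes\bar e_k(t)$ for (ii); and the same truncation together with $\tr(e_k(t)\otimes\bar e_k(t))=\|e_k(t)\|^2$, the summed version of \eqref{chisquare}, and the Jordan decomposition $\ga_0=\ga_0^+-\ga_0^-$ for (iii). The only cosmetic differences are that you invoke Cauchy--Schwarz in $\HC^2$ directly where the paper cites its Appendix trace inequality \eqref{eqmyineqtr1}, and you phrase the $\HC^1$-convergence of $\ga^{(N)}(t)$ as a monotone limit rather than a Cauchy-sequence argument.
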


\begin{remark}
All estimates in the theorem can be extended to arbitrary simple
processes $Y(t)$ by taking into account the bound for the coefficient
at $dt$ of its differential.
\end{remark}

\begin{remark}
The proof of positivity of the solutions was usually considered as a difficult task.
Three different approaches for such proof in finite-dimensional case were suggested e.g. in
\cite{BarchBook} (using a link with pure state equations), \cite{Pellegrini10} (using discrete
Markov  chain approximation), \cite{KolQuantFrac} (turning to Stratonovich SDE or using the theory
of attainable boundary points). We give here a simple proof that works also in infinite dimensional case,
It is close in spirit to the proof of \cite{BarchBook}.
\end{remark}

\begin{proof}
(i) The existence of a unique solution is straightforward, as the coefficients at $dt$ and $dY$ are
bounded linear operator of $\ga$ in $\HC^2_s$.

We shall work with equation \eqref{Lindstoch}, extension to  \eqref{Lindstochin} being automatic.

We derive from
\eqref{Lindstoch} by Ito's formula that
\[
d \, {\tr}\, \ga^2={\tr} \,(\ga L^*\ga L+\ga L\ga L^*+ L \ga L \ga + \ga L^* \ga L^*) dt
\]
\begin{equation}
\label{Lindstochquadtr}
+{\tr} \, [L\ga^2+\ga L^*\ga+\ga L\ga+\ga^2 L^*]  dY_t,
\end{equation}
and thus
\[
\E \, {\tr}\, \ga^2(t)
={\tr}\, \ga_0^2 +\E \, {\tr} \,
\int_0^t (\ga(s) L^*\ga(s) L+\ga(s) L\ga(s) L^*+ L \ga(s) L \ga(s) + \ga(s) L^* \ga(s) L^*) ds
\]
so that, by \eqref{eqmyineqtr1},
\[
\E \, {\tr}\, \ga^2(t) \le {\tr}\, \ga_0^2+ 4 \|L\|^2 \int_0^t  \E \, {\tr} \,(\ga^2(s)) \, ds.
\]
and \eqref{Lindstochquadex1} follows by By Gronwall's lemma.

 (ii) Since $\ga_0$ is a positive operator from $\HC^2_s$, it follows that
  there exists a orthonormal basis $\{e_k\}$ in $\HC$ such that $\ga_0$
 can be presented as a convergent (in $\HC^2_s$) series
 \[
 \ga_0=\sum_{k=1}^{\infty} p_k e_k\otimes \bar e_k
 \]
 with non-increasing non-negative sequence $\{p_k\}$ from $l^2$. Hence $\ga_0=\lim \ga_{0n}$
 with finite-dimensional operators
 \[
 \ga_{0n}=\sum_{k=1}^n p_k e_k\otimes \bar e_k.
 \]
By linearity (and uniqueness of solutions), the solution $\ga_n(t)$ with the initial condition $\ga_{0n}$ is the finite
convex combination of the solutions with the initial conditions $e_k\otimes \bar e_k$, the latter
being given by $e_k(t)\otimes \bar e_k(t)$ with $e_k(t)$ solving the linear filtering equation for
pure states \eqref{eqqufiBlins}, and thus being positive definite. Therefore, $\ga_n(t)$ are positive-definite.
On the other hand, by \eqref{Lindstochquadex1}, $\ga_n(t)-\ga(t)$ tend to zero, as $n\to \infty$,
as the solutions with initial condition $\ga_0-\ga_{0t}$ tending to zero. Hence all $\ga (t)$ are
also positive-definite.

(iii) First assume that $\ga_0$ is positive. We then use the approximations
$\ga_n(t)$ as defined in (ii) above.
Since all $e_k(t)\otimes \bar e_k(t)$ are positive-definite operators,
the sequence $\ga_n(t)$ is monotonically increasing in $n$.

Since
\begin{equation}
\label{tracenormsq}
{\tr} \, e_k(t)\otimes \bar e_k(t)=\|e_k(t)\|^2,
\end{equation}
it follows that, for $n>m$,
\[
{\tr}\, |\ga_n(t)-\ga_m(t)|={\tr}\, (\ga_n(t)-\ga_m(t))
=\sum_{k=m+1}^n p_k \|e_k(t)\|^2,
\]
and thus the sequence $\ga_n(t)$ converges not only in $\HC^2$, but also in
the space of trace-class operators $\HC^1$. Hence, $\ga(t)$ is of trace class and
${\tr }\, \ga(t)=\lim {\tr}\, \ga_n(t)$.

From \eqref{chisquare} and \eqref{tracenormsq} it follows that
\[
{\tr} \,\ga_n(t)={\tr} \, \ga_{0n}+\int_0^t {\tr} (L\ga_n(s)+\ga_n(s) L^*) dY(s).
\]
Passing to the limit in this equation
we obtain \eqref{Lindstochmart}.

If $Y(t)$ is a Brownian motion, then from \eqref{Lindstochmart}, Ito's formula and the estimate
$|{\tr} (\ga L)|\le {\tr}\, \ga \,  \|L\|$, it follows that
 \[
 \E ({\tr}\, \ga(t))^2
 \le  ({\tr} \, \ga(0))^2+4 \|L\|^2 \int_0^t \E ({\tr} \, \ga(s))^2\,  ds
\]
implying \eqref{Lindstochmart1} by Gronwall's lemma.

Finally, if $\ga_0$ is not positive, we decompose $\ga_0$
as the difference of its positive and negative parts:
$\ga_0=\ga_0^+-\ga_0^-$. Applying  \eqref{Lindstochmart}
 to the corresponding solutions $\ga^{\pm}(t)$ , we get  \eqref{Lindstochmart} for $\ga(t)$
by linearity. Similarly estimates \eqref{Lindstochmart1} are obtained. It remains estimate \eqref{Lindstochmart2}.
Firstly it clearly holds with the sign of equality for positive $\ga_0$. For general Hermitian $\ga_0$,
we can write
\[
{\tr} \, |\ga(t)| \le {\tr} \, \ga^+(t) +{\tr} \, \ga^-(t).
\]
The sign of inequality is due to the fact that, though the solutions $\ga^{\pm}(t)$ are positive operators,
 they are not necessarily positive and negative parts of the operator $\ga(t)$).
Applying  \eqref{Lindstochmart2} to $\ga^{\pm}(t)$ we get \eqref{Lindstochmart2} to $\ga(t)$.
 \end{proof}

\begin{remark}
Similarly to the arguments of (i) one can establish the continuity estimates
\begin{equation}
\label{Lindstochcontex}
\E [{\tr}\, (e^{-iHt}\ga(t)e^{iHt}-e^{-iHs}\ga(s)e^{iHs})^2]
\le 8 (t-s) (t\|L\|^4 +\|L\|^2) {\tr}\, \ga_0^2 \exp\{ 4t \|L\|^2\}.
\end{equation}
for $0\le s\le t$, and in case of bounded $H$,
\begin{equation}
\label{Lindstochcontexb}
\E [{\tr}\, (\ga(t)-\ga(s))^2] \le 12 (t-s) [t (\|H\|^2+\|L\|^4) +\|L\|^2] {\tr}\, \ga_0^2 \exp\{ 4t \|L\|^2\}.
\end{equation}
\end{remark}

Important part of well-posedness of an equation is the continuous dependence of its solution on
 initial conditions and parameters. By linearity, continuous dependence on initial condition
 follows from \eqref{Lindstochquadex1}. Next result establishes the continuous dependence on
 a bounded part of the Hamiltonian.

\begin{theorem}
\label{LindStochLin2} Under assumptions of Theorem \ref{LindStochLin1}
assume $Y(t)$ is a Brownian motion and
consider two equations of type \eqref{Lindstoch}:
\begin{equation}
\label{Lindstochtwo}
d\ga(t)=-i[H,\ga] \, dt -i[H_j,\ga] \, dt+\LC_L \ga(t) \, dt +(L\ga(t)+\ga(t) L^*) dY(t),
\end{equation}
$j=1,2$, where $H_1$ and $H_2$ are two bounded self-adjoint operators in $\HC$.
Then for their solutions $\ga_j(t)$, $j=1,2$, with one and the same positive initial
condition $\ga_0$ of trace-class
we have the estimates for the deviations in the norms of $\HC^1$ and $\HC^2$:
\begin{equation}
\label{eqLindstochLin21}
\E \, {\tr}\, |\ga_1(t)-\ga_2(t)|\le 2 t \|H_2-H_1\| \, {\tr} \, \ga_0,
\end{equation}
\begin{equation}
\label{eqLindstochLin22}
\sqrt{\E \, {\tr}\, (\ga_1(t)-\ga_2(t))^2}
\le 2 t \|H_2-H_1\|\sqrt{ {\tr} \, \ga_0^2} \exp\{2t \|L\|^2\}.
\end{equation}
\end{theorem}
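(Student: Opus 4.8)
The plan is to express the difference $\de(t):=\ga_1(t)-\ga_2(t)$ through a single inhomogeneous copy of the linear master equation \eqref{Lindstoch} and then feed the trace estimates of Theorem \ref{LindStochLin1} into a Duhamel representation. Subtracting the two equations \eqref{Lindstochtwo} and using the algebraic identity
\[
-i[H_1,\ga_1]+i[H_2,\ga_2]=i[H_2-H_1,\ga_1]-i[H_2,\de],
\]
one sees that $\de(t)$, with $\de(0)=0$, solves
\[
d\de(t)=-i[H+H_2,\de(t)]\,dt+\LC_L\de(t)\,dt+i[H_2-H_1,\ga_1(t)]\,dt+(L\de(t)+\de(t)L^*)\,dY(t),
\]
i.e. equation \eqref{Lindstoch} with Hamiltonian $H+H_2$ and an extra source term $g(t)\,dt$, where $g(t):=i[H_2-H_1,\ga_1(t)]$. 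By Theorem \ref{LindStochLin1}(ii),(iii) the process $\ga_1(t)$ is positive and of trace class, and since $H_2-H_1$ is bounded self-adjoint, $g(t)$ is a self-adjoint, trace-class, Hilbert--Schmidt-valued adapted process. (If $H$ is unbounded we first pass to the interaction representation as in \eqref{Lindstochin}: this makes $H$ vanish and turns $L$, $H_1$, $H_2$ into uniformly bounded time-dependent families with unchanged norms, while conjugation by the unitary group $e^{-iHt}$ affects none of the quantities ${\tr}\,|\ga_1-\ga_2|$, ${\tr}\,(\ga_1-\ga_2)^2$, $\|H_2-H_1\|$, ${\tr}\,\ga_0$, ${\tr}\,\ga_0^2$ in \eqref{eqLindstochLin21}--\eqref{eqLindstochLin22}. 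So it suffices to treat bounded, possibly time-dependent, coefficients.)

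Next I would use the stochastic variation-of-constants formula. Let $\Phi_{s,t}$ be the random linear solution flow on $\HC^2_s$ of the homogeneous equation \eqref{Lindstoch} with Hamiltonian $H+H_2$ (well defined and a.s.\ invertible for bounded coefficients). Applying Ito's formula to $\Phi_{s,t}^{-1}\de(t)$, the noise-multiplicative terms cancel against the Ito correction coming from $\Phi_{s,t}^{-1}$, leaving $d(\Phi_{s,t}^{-1}\de(t))=\Phi_{s,t}^{-1}g(t)\,dt$, whence
\[
\de(t)=\int_0^t \Phi_{s,t}\bigl(g(s)\bigr)\,ds .
\]
By the flow/Markov property and time-homogeneity, Theorem \ref{LindStochLin1} applied to the equation started at time $s$ from an $\FC_s$-measurable datum $\xi$ gives, via \eqref{Lindstochmart2} and \eqref{Lindstochquadex1},
\[
\E\bigl[{\tr}\,|\Phi_{s,t}\xi|\,\big|\,\FC_s\bigr]\le {\tr}\,|\xi|,\qquad
\E\bigl[{\tr}\,(\Phi_{s,t}\xi)^2\,\big|\,\FC_s\bigr]\le e^{4(t-s)\|L\|^2}\,{\tr}\,\xi^2 .
\]

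The two bounds then follow by the triangle inequality for the (Bochner) integral defining $\de(t)$. For \eqref{eqLindstochLin21}, taking trace norms, expectations, Tonelli, and the first flow estimate,
\[
\E\,{\tr}\,|\de(t)|\le\int_0^t\E\,{\tr}\,|g(s)|\,ds\le\int_0^t 2\|H_2-H_1\|\,\E\,{\tr}\,|\ga_1(s)|\,ds,
\]
and since $\ga_1(s)\ge0$ while ${\tr}\,\ga_1(s)$ is a martingale by \eqref{Lindstochmart}, $\E\,{\tr}\,|\ga_1(s)|=\E\,{\tr}\,\ga_1(s)={\tr}\,\ga_0$, which yields \eqref{eqLindstochLin21}. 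For \eqref{eqLindstochLin22}, Minkowski's integral inequality in $L^2(\Om;\HC^2_s)$ together with the second flow estimate gives
\[
\sqrt{\E\,{\tr}\,\de^2(t)}\le\int_0^t\sqrt{\E\,{\tr}\,(\Phi_{s,t}g(s))^2}\,ds
\le\int_0^t e^{2(t-s)\|L\|^2}\sqrt{\E\,{\tr}\,g^2(s)}\,ds .
\]
Since ${\tr}\,g^2(s)=\|[H_2-H_1,\ga_1(s)]\|_{\HC^2}^2\le 4\|H_2-H_1\|^2\,{\tr}\,\ga_1^2(s)$ and $\E\,{\tr}\,\ga_1^2(s)\le {\tr}\,\ga_0^2\,e^{4s\|L\|^2}$ by \eqref{Lindstochquadex1}, the integrand equals $2\|H_2-H_1\|\sqrt{{\tr}\,\ga_0^2}\,e^{2(t-s)\|L\|^2}e^{2s\|L\|^2}=2\|H_2-H_1\|\sqrt{{\tr}\,\ga_0^2}\,e^{2t\|L\|^2}$, independent of $s$; integrating over $[0,t]$ produces exactly \eqref{eqLindstochLin22}.

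The step requiring the most care — the main obstacle — is the rigorous justification, in the infinite-dimensional singular-coefficient setting, of the variation-of-constants representation and of the conditional flow estimates: that $\Phi_{s,t}$ is well defined and invertible, that Theorem \ref{LindStochLin1} may legitimately be applied to a random $\FC_s$-measurable initial datum through the Markov property, and that the Bochner integrals in $\HC^1$ and $\HC^2_s$ and their interchange with $\E$ are valid. After reduction to bounded (possibly time-dependent) coefficients via the interaction picture this is standard. As an alternative for the Hilbert--Schmidt bound \eqref{eqLindstochLin22} one can bypass the flow and apply Ito's formula directly to ${\tr}\,\de^2(t)$ exactly as in the proof of Theorem \ref{LindStochLin1}(i), bounding the quartic-in-$\de$ term by $4\|L\|^2\,{\tr}\,\de^2$ through \eqref{eqmyineqtr1} and the cross term by $4\|H_2-H_1\|\,\|\de\|_{\HC^2}\|\ga_1\|_{\HC^2}$ via Cauchy--Schwarz, then integrating the resulting differential inequality for $\sqrt{\E\,{\tr}\,\de^2(t)}$ with integrating factor $e^{-2t\|L\|^2}$; the $\HC^1$ estimate, however, seems to genuinely require the Duhamel/flow argument, since ${\tr}\,|\de|$ is not amenable to Ito's formula.
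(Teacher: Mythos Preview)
Your proof is correct and follows essentially the same route as the paper: subtract the two equations, write the difference via the Duhamel (variation-of-constants) formula for the linear stochastic master flow, insert conditional expectations, and apply the trace bounds \eqref{Lindstochmart2} and \eqref{Lindstochquadex1} of Theorem~\ref{LindStochLin1}. The only cosmetic difference is that the paper places $H_1$ in the homogeneous part and $\ga_2$ in the source, while you do the symmetric choice; your added remark about the alternative Ito's-formula derivation of \eqref{eqLindstochLin22} is exactly what the paper notes in the Remark following the proof.
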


\begin{proof}
By turning to the interaction representation we can reduce the story to the case $H=0$.
Then by subtracting two equations we get the following:
\[
d(\ga_1(t)-\ga_2(t))=-i[H_1,\ga_1(t)-\ga_2(t)] \, dt
+\LC_L (\ga_1(t)-\ga_2(t)) \, dt +(L(\ga_1(t)-\ga_2(t))
\]
\begin{equation}
\label{Lindstochtwo1}
+(\ga_1(t)-\ga_2(t)) L^*) dY(t)
+i[H_2-H_1, \ga_2(t)] \, dt.
\end{equation}
Denoting by $\Phi_t$ the operator giving solution to the Cauchy problems for equation \eqref{Lindstochtwo}
with $H=0$ and $j=1$, we can express solution to \eqref{Lindstochtwo1} with the vanishing
initial condition in the following standard Du Hamel form:
\begin{equation}
\label{Lindstochtwo2}
\ga_1(t)-\ga_2(t)=i\int_0^t \Phi_{t-s} [H_2-H_1, \ga_2(s)] \, ds.
\end{equation}
Hence
\[
\E \, {\tr}\, |\ga_1(t)-\ga_2(t)|\le \int_0^t \E \, {\tr} \, |\Phi_{t-s} [H_2-H_1, \ga_2(s)]| \, ds.
\]
By the chain rule,  one can insert the conditional expectation with
respect to $\FC_s$ inside the expectation on the r.h.s. of the inequality
and then apply \eqref{Lindstochmart2} leading to the estimate
\[
\E \, {\tr}\, |\ga_1(t)-\ga_2(t)|\le \int_0^t \E \, {\tr} \, | [H_2-H_1, \ga_2(s)]| \, ds
\]
\[
\le 2\|H_2-H_1\| \int_0^t \E \, {\tr} \, \ga_2(s) \, ds
\le 2 t \|H_2-H_1\|  \, {\tr} \, \ga_0,
\]
implying \eqref{eqLindstochLin21}.

Similarly, using \eqref{Lindstochtwo2}, insertion of conditional expectation
and estimates  \eqref{Lindstochquadex1} and \eqref{eqmyineqtr2}, we write
\[
\sqrt{\E \, {\tr}\, (\ga_1(t)-\ga_2(t))^2}
\le \int_0^t \sqrt{ \E \, {\tr} \, (\Phi_{t-s} [H_2-H_1, \ga_2(s)])^2} \, ds
\]
\[
\le \int_0^t \sqrt{\E \, {\tr} \, ([H_2-H_1, \ga_2(s)])^2} \exp\{2(t-s)\|L\|^2\} \, ds
\]
\[
\le 2\|H_2-H_1\| \int_0^t \sqrt{\E \, {\tr} \, (\ga_2(s))^2} \exp\{2(t-s)\|L\|^2\} \, ds
\]
\[
\le 2 \|H_2-H_1\| \sqrt{{\tr} \, \ga_0^2} \int_0^t \exp\{2t\|L\|^2\} \, ds
\]
implying \eqref{eqLindstochLin22}.
\end{proof}

\begin{remark} An alternative proof of \eqref{eqLindstochLin22} can be given by writing
down the equation for ${\tr} \, (\ga_1(t)-\ga_2(t))^2$ and then applying Gronwall's lemma.
\end{remark}

\section{Stochastic Lindblad (or quantum master) equations: normalised version}
\label{secnormeq}

We shall now establish the link between linear and nonlinear master equation in analogy
with the case of pure states dealt above.

From \eqref{Lindstochmart} we can derive by Ito's formula that
\[
d\frac{1}{{\tr} \, \ga(t)}=-\frac{1}{({\tr} \, \ga(t))^2} \, {\tr} \, (L\ga(t) +\ga(t) L^*) dY_t
+\frac{1}{({\tr} \, \ga(t))^3} [{\tr} \, (L\ga(t)+\ga(t) L^*)]^2 \, dt
\]
Hence by Ito's product rule we check that the normalised density operator
$\rho(t)=\ga(t)/{\tr} \, \ga(t)$ satisfies the equation
 \[
d\rho=-i[H, \rho(t)] \, dt +\LC_L \rho(t)\, dt
\]
 \begin{equation}
\label{Lindstochnorm}
+(L\rho(t)+\rho(t) L^*-\rho(t)\, {\tr} \, (L\rho(t)+\rho(t) L^*) )
[dY_t-{\tr} \, (L\rho(t)+\rho(t) L^*) dt].
\end{equation}

Therefore, in terms of the {\it innovation process}
 \begin{equation}
\label{outputinnovation}
B(t)=Y(t)-\int_0^t {\tr} \, (L\rho(s)+\rho(s) L^*) \, ds
\end{equation}
 the equation for the inverse trace rewrites as
  \begin{equation}
\label{eqtrinnov}
d\frac{1}{{\tr} \, \ga(t)}=-\frac{1}{{\tr} \, \ga(t)} \, {\tr} \, (L\rho(t) +\rho(t) L^*) dB(t)
\end{equation}
and the equation for the normalised density operator \eqref{Lindstochnorm} rewrites in the
standard form \eqref{Lindstochnorm1} of the nonlinear filtering equation.

Let us summarise the corresponding calculations performed above in the following statement that is
a mixed state analog of Propositions \ref{linnorrmpure} and \ref{linnorrmpure1}.

\begin{prop}
\label{linnonlin}
Suppose, as usual, that $L$ is bounded and $H$ is self-adjoint. (i) Let $\ga(t)$ satisfy the linear equation
\eqref{Lindstochin} (or \eqref{Lindstoch} in case of bounded $H$) in terms of a simple Ito process $Y(t)$,
and have positive initial condition of trace-class $\ga_0$. Then ${\tr }\, \ga(t)$
 satisfies \eqref{Lindstochmart}. Moreover, the processes $\rho(t)=\ga(t)/{\tr} \, \ga(t)$
 and $B(t)$ defined via \eqref{outputinnovation} satisfy the nonlinear equation \eqref{Lindstochnormin}
 (and \eqref{Lindstochnorm1} in case of bounded $H$).
 (ii) Let $\rho(t)$ satisfy the nonlinear
equation \eqref{Lindstochnormin} (or \eqref{Lindstochnorm1} in case of bounded $H$)
in terms of a simple Ito process $B(t)$, have positive initial condition
$\rho_0$ and have unit trace for all $t$. Then, if the process ${\tr }\, \ga(t)$ is defined
as the solution of equation \eqref{eqtrinnov} (with any positive initial condition), the process
$\ga(t)$ is defined as $\ga(t)= \rho(t) {\tr }\, \ga(t)$ and the process
$Y(t)$ is defined via \eqref{outputinnovation}, then these processes satisfy
the linear equation \eqref{Lindstoch}.
\end{prop}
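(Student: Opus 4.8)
\emph{Approach.} The proposition is proved by Ito's formula and the Ito product rule, the computations being precisely the ones displayed just before the statement (for (i)) and their reversals (for (ii)). The only delicate point is to legitimise dividing by ${\tr}\,\ga(t)$: one has to know that this scalar process — and, in (ii), the process $n(t)$ built from \eqref{eqtrinnov} — is a.s. strictly positive and, on each finite interval $[0,T]$, a.s. bounded away from $0$, so that $x\mapsto 1/x$ may be composed with it. I would treat the bounded-$H$ case first and then note that \eqref{Lindstochin} and \eqref{Lindstochnormin} are instances of \eqref{Lindstoch} and \eqref{Lindstochnorm1} with the time-dependent bounded coupling $L^{Ht}$, so the general case needs no separate argument; and I may assume $Y$ (resp. $B$) is a Brownian motion, reducing the case of a simple Ito process by Girsanov, which preserves almost-sure positivity.

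\emph{Part (i).} By Theorem \ref{LindStochLin1}(ii)--(iii), $\ga(t)$ is positive and trace-class and ${\tr}\,\ga(t)$ satisfies \eqref{Lindstochmart}. Since ${\tr}(L\ga+\ga L^*)=2{\tr}(L_S\ga)$ and $|{\tr}(L_S\ga)|\le\|L\|\,{\tr}\,\ga$, the process $m(t)={\tr}\,\ga(t)$ satisfies, up to the first time $\tau$ it hits $0$, the linear equation $dm=h(t)\,m\,dY$ with $|h(t)|=2|{\tr}(L_S\ga)/m|\le 2\|L\|$; hence on $[0,\tau)$ it is ${\tr}\,\ga_0$ times a stochastic exponential whose exponent has a drift of size $\le 2\|L\|^2t$ and a continuous $L^2$-martingale part with bracket $\le 4\|L\|^2t$, and such a process cannot reach $0$ in finite time. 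Therefore $\tau=\infty$ a.s., ${\tr}\,\ga(t)>0$ for all $t$, and $\inf_{[0,T]}{\tr}\,\ga(t)>0$ a.s. (Alternatively, decompose $\ga_0=\sum_k p_k e_k\otimes\bar e_k$ as in the proof of Theorem \ref{LindStochLin1} and use that each $\|e_k(t)\|^{-2}$ is a stochastic exponential by \eqref{chisquare1} and the pure-state theory of Proposition \ref{linnorrmpure}, so ${\tr}\,\ga(t)\ge p_1\|e_1(t)\|^2>0$.) Given this, Ito's formula applied to $1/{\tr}\,\ga(t)$ yields the equation for $d\,(1/{\tr}\,\ga(t))$ displayed in the text, and the Ito product rule for $\rho(t)=\ga(t)\cdot(1/{\tr}\,\ga(t))$ gives \eqref{Lindstochnorm}, which after substituting the innovation process \eqref{outputinnovation} becomes \eqref{Lindstochnorm1} (and \eqref{Lindstochnormin} in the interaction picture). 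As $\rho(t)$ is a ratio it has unit trace automatically, so the trace-preserving form is what comes out.

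\emph{Part (ii).} Write $\theta(t)={\tr}(L\rho(t)+\rho(t)L^*)$, which is bounded by $2\|L\|$ because $\rho(t)$ has unit trace. Equation \eqref{eqtrinnov} reads $d(1/n)=-(1/n)\theta\,dB$, so its solution $n(t)$ with a positive initial value is the reciprocal of a stochastic exponential with bounded integrand, hence a.s. strictly positive, finite, and bounded away from $0$ on $[0,T]$; by Ito, $dn=n\theta\,dB+n\theta^2\,dt$. Set $\ga(t)=\rho(t)n(t)$ — positive, trace-class, with ${\tr}\,\ga(t)=n(t)$ — and let $Y(t)$ be given by \eqref{outputinnovation}, i.e. $dY=dB+\theta\,dt$, which is again a simple Ito process ($dY\,dY=dB\,dB=dt$, $Y(0)=0$). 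Expanding $d\ga=(d\rho)n+\rho\,dn+(d\rho)(dn)$ with $d\rho$ from \eqref{Lindstochnorm1} (equivalently \eqref{Lindstochnormin}) and $dn$ as above, and replacing $dB$ by $dY-\theta\,dt$, the $\theta$-dependent terms cancel and one recovers \eqref{Lindstoch} for $\ga(t)$; this is exactly the reversal of the computation in Part (i).

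\emph{Main obstacle.} All the algebra is routine Ito bookkeeping, already performed in the text preceding the proposition; the real content — and the step I expect to require the only genuine care — is the strict positivity of ${\tr}\,\ga(t)$ (in (i)) and of $n(t)$ (in (ii)), uniformly in $t$ on compacts, without which the normalised density operator is not even defined; this is what the stochastic-exponential argument (equivalently, the reduction to pure-state norms via Proposition \ref{linnorrmpure}) is there to supply. The measure change and the interaction-picture reduction are harmless, since equivalent changes of measure preserve a.s. positivity and \eqref{Lindstochin}, \eqref{Lindstochnormin} are special cases of \eqref{Lindstoch}, \eqref{Lindstochnorm1}.
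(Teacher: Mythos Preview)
Your proof is correct and follows the same route as the paper: the proposition is stated there as a summary of the Ito-formula and product-rule computations displayed in the paragraphs immediately preceding it, with no separate proof given. Your explicit justification that ${\tr}\,\ga(t)$ (and $n(t)$ in part (ii)) stays strictly positive via the stochastic-exponential argument is an addition of rigor that the paper leaves implicit, but the core approach is identical.
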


The main complication with nonlinear equation \eqref{Lindstochnorm1}
is due to the fact that the function ${\tr} \, (L\rho+\rho L^*)$ appearing in
its r.h.s. is not Lipschitz continuous as a function of $\rho$ in our main working
space $\HC^2$ unless $L$ itself belongs to this space. In this particular case one
can build the theory of nonlinear equation \eqref{Lindstochnorm1} in analogy with the
linear equation. For general bounded $L$ we shall work via the correspondence of
Proposition \ref{linnonlin} starting with the following result.

\begin{prop}
\label{linnonlin1}
Let $L$ be bounded, $H$ self-adjoint, and $B(t)$ be a simple Ito process.
For any adapted continuous process $\rho(t)$ in $\HC^2_s$
 with all $\rho(t)$ positive and of unit trace, let us define the process $T(t)={\tr }\, \ga(t)$ as
the solution of equation \eqref{eqtrinnov} with the initial condition equal to $1$, the process
$\ga(t)$ as $\ga(t)=T(t)  \rho(t)$ and the process
$Y(t)$ via \eqref{outputinnovation}. Let $\tilde \ga(t)$ be the solution
of the linear equation \eqref{Lindstoch} with the initial condition $\ga_0=\rho_0$. Set
$\tilde \rho(t)=\tilde \ga(t) / {\tr}\, \tilde \ga(t)$ and define $\tilde B$
via equation \eqref{outputinnovation} with $\tilde \rho$ instead of $\rho$.
 Then the process $\rho(t)$
is a fixed point of the mapping $\rho(.) \to \tilde \rho(.)$ if and only if $\rho(.)$ solves
the nonlinear equation \eqref{Lindstochnorm1}.
\end{prop}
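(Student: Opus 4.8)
The plan is to prove the two implications of the equivalence separately, using Proposition \ref{linnonlin} as the engine and the uniqueness half of Theorem \ref{LindStochLin1} to glue the pieces together. Conceptually the statement is a bookkeeping lemma: it turns the ``linear $\leftrightarrow$ nonlinear'' correspondence of Proposition \ref{linnonlin} into a genuine fixed-point characterisation of solutions of \eqref{Lindstochnorm1}, which is what makes it possible to run a Picard-type argument for the non-Lipschitz equation \eqref{Lindstochnorm1} in the next section via the well-posed linear equation \eqref{Lindstoch}.

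First I would treat the implication ``$\rho(\cdot)$ solves \eqref{Lindstochnorm1} $\Rightarrow$ $\rho(\cdot)$ is a fixed point''. Assume $\rho(\cdot)$ solves the nonlinear equation \eqref{Lindstochnorm1} (resp.\ \eqref{Lindstochnormin} for unbounded $H$) driven by $B$. Since $\rho(t)$ is positive, of unit trace, and the triple $(T,\ga,Y)$ is precisely the one constructed in Proposition \ref{linnonlin}(ii) out of $\rho$ and $B$ (and $T(0)=1$ is an admissible positive initial condition for \eqref{eqtrinnov}), that proposition gives that $\ga(t)=T(t)\rho(t)$ solves the linear equation \eqref{Lindstoch} driven by $Y$, with initial value $\ga(0)=\rho_0$. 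But $\tilde\ga(t)$ solves, by its definition, the same linear equation, with the same initial value and the same driving process, so the uniqueness in Theorem \ref{LindStochLin1}(i) forces $\tilde\ga=\ga$; hence $\tilde\rho=\tilde\ga/{\tr}\,\tilde\ga=\ga/T=\rho$, i.e.\ $\rho(\cdot)$ is a fixed point of $\rho(\cdot)\mapsto\tilde\rho(\cdot)$.

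For the converse, assume $\tilde\rho=\rho$. The operator process $\tilde\ga(t)$ solves \eqref{Lindstoch} with the positive trace-class datum $\rho_0$, so by Theorem \ref{LindStochLin1}(ii)--(iii) it stays positive and of trace class and ${\tr}\,\tilde\ga(t)$ obeys \eqref{Lindstochmart}; consequently Proposition \ref{linnonlin}(i) applies and shows that $\tilde\rho(t)$, together with the process $\tilde B$ obtained from $Y$ and $\tilde\rho$ through \eqref{outputinnovation}, solves the nonlinear equation \eqref{Lindstochnorm1} (resp.\ \eqref{Lindstochnormin}). It then remains only to identify $\tilde B$ with the given $B$: since $Y$ was built from $B$ and $\rho$ via \eqref{outputinnovation}, and since $\tilde\rho=\rho$ makes the two drift corrections coincide, we get $\tilde B(t)=Y(t)-\int_0^t{\tr}\,(L\rho(s)+\rho(s)L^*)\,ds=B(t)$. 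Hence $\rho=\tilde\rho$ solves \eqref{Lindstochnorm1} driven by $B$, as claimed.

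I do not expect a deep obstacle here, since all the analytic content already resides in Proposition \ref{linnonlin} and Theorem \ref{LindStochLin1}; the only point that requires genuine care — and which I would dispatch in a preliminary sentence rather than leave implicit — is the well-definedness of the map $\rho(\cdot)\mapsto\tilde\rho(\cdot)$ on the stated domain. Adaptedness and $\HC^2_s$-continuity of $\tilde\ga$ come from the linear theory, positivity from Theorem \ref{LindStochLin1}(ii), and unit trace of $\tilde\rho$ is automatic from the normalisation; what must be checked is that ${\tr}\,\tilde\ga(t)>0$ for all $t$ almost surely, so that dividing by it is legitimate. This follows because, $\tilde\ga(t)$ being positive, \eqref{Lindstochmart} can be rewritten as a scalar linear equation $d\,{\tr}\,\tilde\ga(t)={\tr}\,\tilde\ga(t)\,h(t)\,dY(t)$ with an adapted coefficient $h$ bounded by $2\|L\|$, whose solution starting from ${\tr}\,\rho_0=1$ is a strictly positive stochastic exponential (equivalently, strict positivity is read off directly from equation \eqref{eqtrinnov} for the inverse trace, or from the convergent expansion $\,{\tr}\,\ga_n(t)=\sum_k p_k\|e_k(t)\|^2$ used in the proof of Theorem \ref{LindStochLin1}(iii)).
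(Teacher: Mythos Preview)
Your proof is correct and follows essentially the same route as the paper: both directions are obtained by combining Proposition~\ref{linnonlin} with the uniqueness part of Theorem~\ref{LindStochLin1}, exactly as you do (the paper merely presents the two implications in the opposite order and more tersely). Your additional paragraph verifying that ${\tr}\,\tilde\ga(t)>0$ so that the map is well defined is a welcome clarification that the paper leaves implicit.
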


\begin{remark} Similarly one can state that the pair of processes $(\rho(t), B(t))$
is a fixed point of
the mapping $(\rho(.), B(.)) \to (\tilde \rho(.), \tilde B(.))$ if and only if
the pair $(\rho(t), B(t))$ solves \eqref{Lindstoch}.
\end{remark}

\begin{proof} (i) Let $\tilde \rho(.)=\rho(.)$. Then also $B(t)=\tilde B(t)$
and $(\rho(t), B(t))$ solves \eqref{Lindstochnorm1}.
(ii) Let $\rho(t)$ solves \eqref{Lindstochnorm1}. Then, by Proposition \ref{linnonlin},
$\ga(t)$ solves \eqref{Lindstoch}. By Theorem  \ref{LindStochLin1}, $\tilde \ga(t)=\ga(t)$.
\end{proof}

The link with the linear equation allows one to derive the well-posedness
of nonlinear equation \eqref{Lindstochnorm1} in the sense of weak solutions
as a direct consequence of Theorem  \ref{LindStochLin1}. Recall that a strong
solution of an SDE like \eqref{Lindstochnorm1} is a process $\rho(t)$ that can
 be expressed as a measurable function of a given Brownian motion $B(t)$. By a
weak solution one means a pair of processes $(\rho(t),B(t))$ (where $B(t)$ is a Brownian
motion) defined on a certain stochastic basis, adapted to its filtration and satisfying
\eqref{Lindstochnorm1}. Recall that one says that weak solution is  unique in law
if for any two solutions $(\rho^1,B^1)$ and $(\rho^2,B^2)$ (possibly defined on
different probability spaces) the processes $\rho^1$ and $\rho^2$ have the same distribution.

\begin{theorem}
\label{LindStochnonLin}
Let $H$ be self-adjoint, $L$ bounded and $\rho_0$ a positive-definite operator of unit trace.
Then there exists a unique in law weak solution of equation \eqref{Lindstochnorm1} in $\HC^2$
with $B(t)$ a Brownian motion, the initial data $\rho_0$ and such that all $\rho(t)$
are positive-definite operators of unit trace.
\end{theorem}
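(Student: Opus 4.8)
The plan is to transfer well-posedness from the linear master equation \eqref{Lindstoch}, already settled in Theorem \ref{LindStochLin1}, to the nonlinear equation \eqref{Lindstochnorm1}, using the correspondence of Proposition \ref{linnonlin} together with the reference-probability device of quantum filtering: the trace of the linear solution serves as a Radon--Nikodym density relating the measure under which the output process is Brownian to the one under which the innovation process is Brownian.

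For existence, fix a horizon $T$ and a stochastic basis carrying a Brownian motion $Y$, and let $\ga(t)$ be the unique solution in $\HC^2_s$ of \eqref{Lindstoch} (or of its mild/interaction form \eqref{Lindstochmild}, \eqref{Lindstochin} when $H$ is unbounded) with $\ga_0=\rho_0$, which exists by Theorem \ref{LindStochLin1}(i). By parts (ii) and (iii) of that theorem $\ga(t)$ is positive and of trace class, and $T(t):={\tr}\,\ga(t)$ is a nonnegative square-integrable martingale with $\E\,T(t)={\tr}\,\rho_0=1$. The first substantive point is strict positivity of $T(t)$: decomposing $\rho_0=\sum_k p_k\,e_k\otimes\bar e_k$ with some $p_{k_0}>0$, one has $T(t)\ge p_{k_0}\|e_{k_0}(t)\|^2$ where $e_{k_0}(t)$ solves the linear pure-state equation \eqref{eqqufiBlins}; by Propositions \ref{linnorrmpure}(iii) and \ref{linnorrmpure1} the reciprocal $\|e_{k_0}(t)\|^{-2}$ is a martingale under the measure, equivalent to the reference one on each $\FC_T$, that makes the corresponding innovation a Brownian motion, so $\|e_{k_0}(t)\|^2>0$ almost surely for each $t$, and since a nonnegative continuous martingale is absorbed at $0$ this holds simultaneously for all $t\in[0,T]$. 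Now put $\rho(t)=\ga(t)/T(t)$, which is positive of unit trace, and define $B(t)$ by \eqref{outputinnovation}; by Proposition \ref{linnonlin}(i) the pair $(\rho(t),B(t))$ satisfies \eqref{Lindstochnorm1}. Finally, since $T(t)$ is a strictly positive martingale of unit mean, $d\tilde\P/d\P|_{\FC_t}=T(t)$ defines a measure equivalent to $\P$ on each $\FC_T$; from \eqref{Lindstochmart} one has $dT(t)=T(t)\,{\tr}(L\rho(t)+\rho(t)L^*)\,dY(t)$, and $B$ is precisely $Y$ minus the compensator appearing in \eqref{outputinnovation}, so Girsanov's theorem makes $B$ a $\tilde\P$-Brownian motion, while the pathwise identity \eqref{Lindstochnorm1} persists $\tilde\P$-a.s. by equivalence. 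Thus $(\rho,B)$ is a weak solution with all $\rho(t)$ positive of unit trace, and consistency over horizons yields a solution on $[0,\infty)$.

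For uniqueness in law, run the construction backwards. Let $(\rho(t),B(t))$ be any weak solution on some basis, with $B$ a Brownian motion and each $\rho(t)$ positive of unit trace. The process $\be(t):={\tr}(L\rho(t)+\rho(t)L^*)=2\,{\tr}(L_S\rho(t))$ is adapted and bounded, $|\be(t)|\le 2\|L\|$, so \eqref{eqtrinnov} with unit initial value has the unique solution $1/T(t)=\exp(-\int_0^t\be\,dB-\tfrac12\int_0^t\be^2\,ds)$, a strictly positive exponential martingale of unit mean; set $\ga(t)=T(t)\rho(t)$ and $Y(t)=B(t)+\int_0^t\be(s)\,ds$. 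By Proposition \ref{linnonlin}(ii), $\ga(t)$ solves the linear equation \eqref{Lindstoch} (or its mild form) driven by $Y$, with $\ga_0=\rho_0$. Introduce the equivalent measure $\mathbf{Q}$ on each $\FC_T$ with $d\mathbf{Q}/d\mathbf{Q}_0|_{\FC_t}=1/T(t)$, where $\mathbf{Q}_0$ denotes the solution's measure; by Girsanov $Y$ is a $\mathbf{Q}$-Brownian motion, and under $\mathbf{Q}$ the process $\ga$ is the unique, hence strong, solution of \eqref{Lindstoch} driven by $Y$ from $\rho_0$ (Theorem \ref{LindStochLin1}, the coefficients being bounded linear maps of $\ga$ on $\HC^2_s$). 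Therefore $\ga=\Psi(Y)$ for a fixed measurable map $\Psi$ determined by $H$, $L$, $\rho_0$ alone, so the $\mathbf{Q}$-law of $(Y,\ga)$, and hence of the measurable functionals $T(\cdot)$, $\rho(\cdot)=\ga(\cdot)/T(\cdot)$ and $B(\cdot)$, is uniquely determined. Since for any bounded measurable functional $F$ of the path of $\rho$ on $[0,T]$ one has $\E^{\mathbf{Q}_0}[F(\rho)]=\E^{\mathbf{Q}}[F(\rho)\,T(T)]$, which depends only on this $\mathbf{Q}$-law, the law of $\rho$ under $\mathbf{Q}_0$ is uniquely determined, proving uniqueness in law.

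The main obstacle, and essentially the only place requiring more than bookkeeping with Girsanov's theorem and Proposition \ref{linnonlin}, is establishing the strict positivity of $T(t)={\tr}\,\ga(t)$ on every finite interval, needed in the existence step to define the normalization $\rho=\ga/T$ and to make the change of measure legitimate; in the uniqueness direction the analogous positivity and true-martingale property of $1/T(t)$ are automatic because $\be$ is bounded. Once this positivity is secured, the remaining statements follow directly from the linear theory of Section \ref{seclineq}.
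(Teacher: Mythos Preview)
Your argument follows exactly the paper's route: construct the linear solution $\ga$ with $Y$ Brownian, normalise to $\rho=\ga/{\tr}\,\ga$, and invoke Girsanov to make the innovation $B$ Brownian for existence, then reverse the construction and appeal to strong uniqueness for the linear equation to pin down the law of $\rho$. You supply considerably more detail than the paper's four-line sketch, notably on strict positivity of ${\tr}\,\ga(t)$ (which the paper does not address at all); your positivity argument via the reciprocal martingale is slightly circular as phrased---equivalence of the measure making the individual innovation Brownian already presupposes $\|e_{k_0}\|>0$---but the direct route, reading \eqref{chisquare} as $d\|e_{k_0}\|^2=\|e_{k_0}\|^2\cdot 2\langle L_S\rangle_{e_{k_0}}\,dY$ with integrand bounded by $2\|L\|$ and hence a strictly positive Dol\'eans exponential, settles it at once.
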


\begin{proof} By Theorem  \ref{LindStochLin1} one can construct a strong solution to linear equation
\eqref{Lindstoch} with $Y(t)$ a Brownian motion. By Proposition \ref{linnonlin1},
$\rho(t)=\ga(t)/{\tr} \, \ga(t)$ satisfies equation \eqref{Lindstochnorm1}. By Girsanov's theorem,
$B(t)$ will be a Brownian motion under an appropriate change of equivalent measure.
This proves the existence of a solution. By Proposition \ref{linnonlin1},
any solution can be obtained in this way. Since the distribution of $\ga(t)$ is fixed
by Theorem  \ref{LindStochLin1}, the distribution of $\rho(t)$ is also uniquely defined.
\end{proof}

Let us turn to the strong solutions of \eqref{Lindstochnorm1}.

With Proposition \ref{linnonlin1} in mind, one could naturally try to prove the existence of
solutions to \eqref{Lindstochnorm1} by proving the existence
of a fixed point of the mapping $\rho\to \tilde \rho$. However, a direct attempt to prove
 the contraction property of this mapping would meet the same difficulty mentioned above, namely the fact
 that the function ${\tr} \, (L\rho+\rho L^*)$ is not Lipschitz continuous.
 However, Proposition \ref{linnonlin1} can be used to reveal the structure of \eqref{Lindstochnorm1}
 and then eventually reformulate it in terms of a system of equations with Lipschitz continuous
 coefficients. This program is carried out in the next theorem. Its importance lies not only in
 its stated result, but in the main tool in the proof, which is the possibility to rewrite stochastic master
 equation for mixed states in the equivalent form that coincides with the corresponding equation for pure
  states in an appropriately chosen Hilbert space.

 \begin{theorem}
\label{LindStochnonLin1}
Let $H$ be self-adjoint, $L$ bounded, $\rho_0$ a positive-definite operator of unit trace,
and $B(t)$ a simple Ito process. Then there exists a unique strong solution of equation
\eqref{Lindstochnorm1} in $\HC^2$, with the initial data $\rho_0$
and such that all $\rho(t)$ are positive-definite trace class operators of unit trace.
\end{theorem}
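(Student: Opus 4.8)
The plan is to rewrite the mixed-state equation \eqref{Lindstochnorm1} as a \emph{pure-state} filtering equation in the Hilbert space $\mathcal{K}=\HC^2$ of all Hilbert--Schmidt operators in $\HC$, with scalar product $(A,B)_{\HC^2}=\tr(A^*B)$, via the purification $\rho=\phi\phi^*$ with $\phi\in\mathcal{K}$. On $\mathcal{K}$ let $\widetilde H\colon X\mapsto HX$ and $\widetilde L\colon X\mapsto LX$ denote left multiplication; since $H$ is self-adjoint, $\widetilde H$ is self-adjoint on $\mathcal{K}$ (possibly unbounded) and generates the unitary group $X\mapsto e^{-iHt}X$, while $\widetilde L$ is bounded with $\|\widetilde L\|=\|L\|$ and $\widetilde L_S,\widetilde L_A$ are left multiplications by $L_S,L_A$. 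The first step is an \emph{intertwining identity}: if $\phi(t)\in\mathcal{K}$ solves the nonlinear filtering equation for pure states \eqref{eqqufiBnonlinsn} in $\mathcal{K}$ (or, for unbounded $H$, its mild form \eqref{eqqufiBnonlinsm}), driven by $B$, with $\|\phi_0\|_{\HC^2}=1$, then $\|\phi(t)\|_{\HC^2}=1$ for all $t$ and $\rho(t):=\phi(t)\phi(t)^*$ solves \eqref{Lindstochnorm1} (resp.\ \eqref{Lindstochnormmild}) with the same $B$. I would verify this by Itô's formula applied to $\phi(t)\phi(t)^*$: writing $\ell(t)=\langle\widetilde L_S\rangle_{\phi(t)}$, all occurrences of $\ell$ in the $dt$-part of $d(\phi\phi^*)$ cancel (using $L=L_S+iL_A$), leaving the $dt$-coefficient $-i[H,\rho]+\LC_L\rho$, the noise coefficient equals $L\rho+\rho L^*-2\ell(t)\rho$, and along solutions $\tr\rho(t)=\|\phi(t)\|_{\HC^2}^2=1$ together with $2\ell(t)=\tr(L\rho(t)+\rho(t)L^*)$, which is precisely \eqref{Lindstochnorm1}; moreover $\rho(t)=\phi(t)\phi(t)^*$ is automatically positive, of trace class, and of unit trace.

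For \emph{existence} I would take any Hilbert--Schmidt square root $\phi_0$ of $\rho_0$ (e.g.\ $\phi_0=\rho_0^{1/2}$), which satisfies $\|\phi_0\|_{\HC^2}^2=\tr\rho_0=1$, and solve the pure-state equation \eqref{eqqufiBnonlinsn} in $\mathcal{K}$ with this initial datum. This is well-posed exactly as in Section~\ref{secpureeq}: the coefficients are globally Lipschitz by Lemma~\ref{lemboundder} applied in $\mathcal{K}$ (with $M$ a left multiplication by a bounded operator), and the case of unbounded $\widetilde H$ reduces, via the interaction representation --- here the left-multiplication structure commutes with $X\mapsto e^{-iHt}X$ and produces the dressed operators $L^{Ht}$ --- to the bounded case, as in Proposition~\ref{linnorrmpure1}. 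Hence there is a unique strong solution $\phi(t)$, and $\rho(t)=\phi(t)\phi(t)^*$ is the required solution of \eqref{Lindstochnorm1}; it is a strong solution because $\phi$ is and $X\mapsto XX^*$ is locally Lipschitz, hence continuous, on $\HC^2$.

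For \emph{uniqueness} let $\rho(t)$ be any strong solution with the stated properties and set $\ell(t):=\tr(L_S\rho(t))=\tfrac12\tr(L\rho(t)+\rho(t)L^*)$, a bounded ($|\ell(t)|\le\|L\|$), continuous, adapted real process. Freezing $\ell$, solve the \emph{linear} (in the unknown) SDE in $\mathcal{K}$ obtained from \eqref{eqqufiBnonlinsn} by replacing $\langle\widetilde L_S\rangle_{\phi}$ with $\ell(t)$, with initial datum $\phi_0=\rho_0^{1/2}$; its coefficients are bounded and adapted, so it has a unique strong solution $\phi(t)$. Repeating the Itô computation of the first step, now with the \emph{given} process $\ell(t)$ in place of $\langle\widetilde L_S\rangle_{\phi(t)}$, one finds that $\psi(t):=\phi(t)\phi(t)^*$ satisfies $d\psi=-i[H,\psi]\,dt+\LC_L\psi\,dt+(L\psi+\psi L^*-2\ell(t)\psi)\,dB$ with $\psi(0)=\rho_0$. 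But this is precisely the linear SDE with bounded adapted coefficients that $\rho$ itself satisfies once $\tr(L\rho+\rho L^*)$ is read as the given process $2\ell(t)$, so by uniqueness for such SDEs in a Hilbert space, $\psi(t)=\rho(t)$, i.e.\ $\rho(t)=\phi(t)\phi(t)^*$. Consequently $\|\phi(t)\|_{\HC^2}^2=\tr\rho(t)=1$ and $\ell(t)=\tr(L_S\phi(t)\phi(t)^*)=\langle\widetilde L_S\rangle_{\phi(t)}$, so $\phi$ in fact solves the \emph{nonlinear} equation \eqref{eqqufiBnonlinsn} in $\mathcal{K}$ with initial datum $\rho_0^{1/2}$; by the uniqueness used in the existence part it is \emph{the} such solution, and $\phi\phi^*$ is independent of the choice of square root because right multiplication by a fixed unitary maps solutions of \eqref{eqqufiBnonlinsn} to solutions and leaves $\phi\phi^*$ unchanged. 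Hence $\rho$ is uniquely determined, and it coincides with the solution constructed in the existence part.

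I expect the uniqueness step to be the main obstacle. One cannot compare the ``associated'' linear equations of two candidate solutions directly, because their output processes --- equivalently, the scalar processes $\ell$ --- differ and $\rho\mapsto\tr(L\rho+\rho L^*)$ is not Lipschitz on $\HC^2$ (which is precisely the reason for passing to the equivalent pure-state formulation). The resolution is the slightly indirect observation above: with $\ell$ treated as given, the operator $\psi=\phi\phi^*$ built from the frozen-coefficient pure-state equation obeys the \emph{same} linear SDE as $\rho$, which forces $\psi=\rho$ and, a posteriori, promotes $\phi$ to a genuine solution of the globally Lipschitz pure-state equation in $\mathcal{K}$, whose well-posedness then closes the argument.
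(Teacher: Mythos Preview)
Your proof is correct and rests on the same key idea as the paper's: recast the nonlinear master equation as a pure-state Belavkin equation in an enlarged Hilbert space, where Lemma~\ref{lemboundder} makes the coefficients globally Lipschitz, and then read off existence and uniqueness from the standard theory.

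The difference is only in the choice of enlarged space. The paper diagonalises $\rho_0=\sum_k p_k\, e_k\otimes\bar e_k$, works in the weighted space $l^2_{\HC}(\{p_k\})$, and writes the lifted equation as \eqref{eqinfdim1}, passing through Proposition~\ref{linnonlin1} and the auxiliary linear equation \eqref{Lindstochnormlinmix} to connect a given solution $\rho$ to a vector $\mathbf e(t)$. You instead use the canonical purification $\rho=\phi\phi^*$ with $\phi\in\HC^2$ and left-multiplication operators $\widetilde H,\widetilde L$; the two constructions are related by the isometry $l^2_{\HC}(\{p_k\})\hookrightarrow\HC^2$, $\mathbf e\mapsto\sum_k\sqrt{p_k}\,e_k\otimes\bar e_k(0)$, under which \eqref{eqinfdim1} becomes precisely your pure-state equation in $\HC^2$. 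Your formulation is basis-free and the purification space does not depend on $\rho_0$, which is tidier; your uniqueness step (freeze $\ell(t)$, observe that $\phi\phi^*$ and $\rho$ satisfy the same linear SDE with bounded adapted coefficients, hence coincide, and then recognise $\phi$ as the unique solution of the Lipschitz pure-state equation) is also somewhat more explicit than the paper's compressed argument via Proposition~\ref{linnonlin1}. Substantively, however, the two proofs are the same.
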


\begin{proof}
According to Proposition \ref{linnonlin1}, $\rho(t)$ solves \eqref{Lindstochnorm1}
 if and only if $\ga(t)=T(t) \rho(t)$ solves the equation
  \begin{equation}
\label{Lindstochnormlinmix}
d\ga(t)=-i[H, \ga(t)] dt +\LC_L \ga(t) dt +(L\ga(t)+\ga(t) L^*)
[dB(t)+ \pi(t) \, dt],
\end{equation}
with
\[
\pi(t)=T^{-1}(t)\, {\tr} \, (L\ga(t)+\ga(t) L^*).
\]

As in the proof of Theorem \ref{LindStochLin1} above, we can
expand $\rho_0=\ga_0$ in a series
\[
 \ga_0=\sum_{k=1}^{\infty} p_k e_k\otimes \bar e_k
 \]
 with a non-negative sequence $\{p_k\}$ summing up to one
 and an orthonormal basis $\{e_k\}$.
Hence we can represent $\ga(t)$ as the convergence series of pure states
 \[
 \ga(t)=\sum_{k=1}^{\infty} p_k e_k(t)\otimes \bar e_k(t),
 \]
 with $e_k(t)$ solving the linear filtering equation for
pure states \eqref{eqqufiBlins}:
\begin{equation}
\label{eqinfdim}
de_k(t)=(-iH e_k(t)-\frac12 L^*L e_k(t))\,dt +Le_k(t) [dB(t)+\pi(t)\, dt].
\end{equation}
Here
\[
\pi(t)=\frac{\sum_{k=1}^{\infty} p_k (e_k(t), (L+L^*) e_k(t))}{\sum_{k=1}^{\infty} p_k \|e_k(t)\|^2}.
\]

It is insightful to consider the infinite-dimensional system of SDEs \eqref{eqinfdim} as a single SDE
with values in the  Hilbert space $l^2_{\HC}(\{p_k\})$ consisting of infinite sequences
$\e=(e_1, e_2, \cdots )$ of vectors from $\HC$ and equipped
with the norm
\[
\|\e\|^2=\sum_{k=1}^{\infty} p_k (e_k, e_k).
\]
Bounded operators in $\HC$ extend naturally (acting identically on each coordinate)
to bounded operators in $l^2_{\HC}(\{p_k\})$ with the preservation of norm.
In this notation system \eqref{eqinfdim} writes down as the SDE
\begin{equation}
\label{eqinfdim1}
d \e(t)=(-iH \e(t)-\frac12 L^*L \e(t))\, dt +L\e(t) \left[dB(t)+\frac{(\e, (L+L^*) \e)}{(\e, \e)} \, dt\right].
\end{equation}

This equation is the same as \eqref{eqqufiBlinsB} (though written in an enhanced Hilbert space). Hence
the coefficients of this equation are globally Lipschitz due to Lemma \ref{lemboundder}.
Therefore, it has the unique solution. And consequently, \eqref{Lindstochnormlinmix} has a unique solution $\ga(t)$,
and hence  $\rho(t)=\ga(t)/T(t)$ is the unique solution to the Cauchy problem for \eqref{Lindstochnorm1}.
\end{proof}

The full well-posedness of a problem includes also a statement on a continuous dependence of
the solution on initial data and parameters of the problem. We shall prove here continuous dependence
on the Hamiltonian, which would be crucial for the next Section.

 \begin{theorem}
\label{LindStochnonLin2}
Under the assumption of Theorem \ref{LindStochnonLin1}
let us consider the Cauchy problem for equations
\begin{equation}
\label{Lindstochnorm11}
d\rho(t)=-i[H+H_j,\rho(t)]\, dt+\LC_L \rho (t)\, dt
+[L\rho(t)+\rho(t) L^*-\rho(t)\, {\tr} \, (L\rho(t)+\rho(t) L^*) ] dB(t),
\end{equation}
$j=1,2$, where $H_1, H_2$ are bounded self-adjoint operators in $\HC$.
Then for the solutions $\rho_j(t)$, $j=1,2$, of these equations with the same
positive initial data $\rho_0$ of unit trace one has the following estimate:
\begin{equation}
\label{Lindstochnorm12}
\E \|\rho_1(t)-\rho_2(t)\|_{H^{1,2}_s} \le \sqrt t C(t) \|H_1-H_2\|,
\end{equation}
where $C(t)$ is an increasing continuous function depending on $\|L\|$,
and where $\HC^{1,2}_s$ means of course either $\HC^1_s$ or $\HC^2_s$.
\end{theorem}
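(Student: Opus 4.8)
The plan is to exploit the representation of the solutions of \eqref{Lindstochnorm1} established in the proof of Theorem~\ref{LindStochnonLin1}: $\rho_j(t)=\Psi(\e^{(j)}(t))/\|\e^{(j)}(t)\|^2$, where $\Psi(\e)=\sum_k p_k e_k\otimes\bar e_k$ is the lifting (into positive operators) of the initial spectral decomposition $\rho_0=\sum_k p_k e_k\otimes\bar e_k$, and $\e^{(j)}(t)=(e_1^{(j)}(t),e_2^{(j)}(t),\cdots)$ is the pure-state process in the enhanced Hilbert space $l^2_{\HC}(\{p_k\})$ solving \eqref{eqinfdim1}, an SDE whose coefficients are globally Lipschitz by Lemma~\ref{lemboundder}. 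First I would pass to the interaction picture with respect to $H$, setting $\mu_j(t)=e^{-iHt}\rho_j(t)e^{iHt}$: this replaces $H$ by $0$ and $L$, $H_j$ by the measurable, uniformly bounded, time-dependent operators $L^{Ht}$, $H_j^{Ht}=e^{iHt}H_je^{-iHt}$, with $\|L^{Ht}\|=\|L\|$, $\|H_j^{Ht}\|=\|H_j\|$, $\|H_1^{Ht}-H_2^{Ht}\|=\|H_1-H_2\|$. Since conjugation by the unitary $e^{-iHt}$ is an isometry of both $\HC^1_s$ and $\HC^2_s$, it suffices to estimate $\E\|\mu_1(t)-\mu_2(t)\|$; from now on $H=0$ and all constants depend only on $t$ and $\|L\|$.

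Writing \eqref{eqinfdim1} for $\e^{(1)}$ and $\e^{(2)}$ (with $H_1$, resp.\ $H_2$, in the linear term, driven by the \emph{same} $B$ and with the same initial value) and subtracting, the two drifts differ only by a term bounded in norm by $\|H_1-H_2\|\,\|\e^{(1)}(t)\|$, while everything else is a difference of the same globally Lipschitz coefficient maps. Applying Itô's formula to $\|\e^{(1)}(t)-\e^{(2)}(t)\|^2$ together with Gronwall's lemma then gives
\[
\E\|\e^{(1)}(t)-\e^{(2)}(t)\|^2\le t\,C_1(t)\,\|H_1-H_2\|^2\,\sup_{s\le t}\E\|\e^{(1)}(s)\|^2 .
\]
To turn this into something useful one needs moments of $\e^{(j)}(t)$. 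Since $H_j$ is self-adjoint it drops out of the scalar equation governing $T_j(t):=\|\e^{(j)}(t)\|^2=\tr\ga_j(t)$, and by \eqref{eqtrinnov} the process $T_j^{-1}$ is a stochastic exponential whose integrand $\tr(L\rho_j+\rho_j L^*)$ is bounded by $2\|L\|$ (using that $\rho_j\ge0$ has unit trace); hence all positive and negative moments of $T_j(t)$ are finite and bounded in terms of $t$ and $\|L\|$ only.

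Finally I would transfer the bound to $\rho_1-\rho_2$. The map $\Psi$ is positivity preserving with $\|\Psi(\e)\|_{\HC^1_s}=\|\e\|^2$ and, from the elementary estimate $\|e\otimes\bar e-e'\otimes\overline{e'}\|_{\HC^1_s}\le\|e-e'\|(\|e\|+\|e'\|)$ and Cauchy--Schwarz in $k$, one gets $\|\Psi(\e)-\Psi(\e')\|_{\HC^1_s}\le\|\e-\e'\|(\|\e\|+\|\e'\|)$. Combining this with $\rho_j=\Psi(\e^{(j)})/\|\e^{(j)}\|^2$ yields, pointwise,
\[
\|\rho_1(t)-\rho_2(t)\|_{\HC^1_s}\le 2\,\frac{\|\e^{(1)}(t)-\e^{(2)}(t)\|\,\bigl(\|\e^{(1)}(t)\|+\|\e^{(2)}(t)\|\bigr)}{\|\e^{(1)}(t)\|^2}.
\]
Taking expectations, applying H\"older's inequality to peel off the factor $\|\e^{(1)}(t)-\e^{(2)}(t)\|$, and invoking the moment bounds above gives $\E\|\rho_1(t)-\rho_2(t)\|_{\HC^1_s}\le C_2(t)\bigl(\E\|\e^{(1)}(t)-\e^{(2)}(t)\|^2\bigr)^{1/2}\le\sqrt t\,C(t)\|H_1-H_2\|$; since $\|\cdot\|_{\HC^2_s}\le\|\cdot\|_{\HC^1_s}$, the same bound holds in $\HC^2_s$, which is \eqref{Lindstochnorm12}. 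I expect the main obstacle to be exactly the moment control in the middle step: keeping the \emph{negative} moments of the normalising factor $\|\e^{(j)}(t)\|^{-2}$ bounded uniformly in $j$, for which the self-adjointness of the Hamiltonians and the boundedness of $L$ are both essential; for $B$ merely a simple Itô process (rather than a Brownian motion) one absorbs its bounded drift into the constants, as in the Remark following Theorem~\ref{LindStochLin1}.
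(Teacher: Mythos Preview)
Your proposal is correct and follows essentially the same route as the paper: reduce to $H=0$ via the interaction picture, lift to the enhanced Hilbert space $l^2_{\HC}(\{p_k\})$ using the representation from Theorem~\ref{LindStochnonLin1}, control $\E\|\e^{(1)}-\e^{(2)}\|^2$ by It\^o's formula and Gronwall, and then transfer the bound back to $\rho_1-\rho_2$ via the same pointwise Lipschitz estimate for $\Psi$ combined with moment bounds for $\|\e^{(j)}\|^{\pm 2}$. The only cosmetic difference is in the last step: the paper first estimates $\|\e^{(2)}\|\le\|\e^{(1)}\|+\|\e^{(1)}-\e^{(2)}\|$ so that only negative moments of $\|\e^{(1)}\|$ appear (at the price of also needing $\E\|\e^{(1)}-\e^{(2)}\|^4$), whereas you keep the mixed factor and control it by an extra Cauchy--Schwarz on the moments of $\|\e^{(1)}\|$ and $\|\e^{(2)}\|$ separately; both are valid.
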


\begin{proof} By changing to the "interaction picture", that is, to equations
on the variable $e^{-itH}\rho(t) e^{itH}$ we can reduce the discussion to the case
of vanishing $H$. Time-dependence of $H_j$ and $L$ arising from this change does
not affect the argument. Therefore, without loss of generality we can set $H=0$.


Making the transformation to the equations in $l^2_{\HC}(\{p_k\})$,
as in the proof of the previous theorem,
 we can rewrite equations \eqref{Lindstochnorm11} as the equations
\begin{equation}
\label{eqinfdim11}
d \e^j(t)=(-iH_j \e^j(t)-\frac12 L^*L \e^j(t))\, dt +L\e^j(t)
\left[dB(t)+\frac{(\e^j, (L+L^*) \e^j)}{(\e^j, \e^j)} \, dt\right],
\end{equation}
with $j=1,2$.

As shown in the previous theorem, these equations are SDEs in a Hilbert space
with globally Lipschitz coefficients. Moreover, these two equations differ by
bounded linear terms. Hence it is a standard procedure
(see e.g. Proposition 7.1 in \cite{KolQuantLLN}) to derive an
estimate for $\E \|(\e^1-\e^2)(t)\|^2$ in terms of $\|H_1-H_2\|$.
Let us sketch the derivation for completeness. From \eqref{eqinfdim11} it follows that
\[
d(\e^1-\e^2)(t)=-iH_1 (\e^1-\e^2)(t)\, dt -\frac12 L^*L(\e^1-\e^2)(t)\, dt +L(\e^1-\e^2)(t) \, dB(t)
\]
\[
+ [G(e^1(t))-G(e^2(t))] \, dt -i(H_1-H_2)\e^2(t) \, dt,
\]
where we denoted
\[
G(\e)=\frac{(\e, (L+L^*) \e)}{(\e, \e)} L\e,
\]
which is known from Lemma \ref{lemboundder} to be Lipschitz continuous mapping with
the Lipschitz constant $G_L=10 \|L\|^2$. Consequently,
 \[
d\|(\e^1-\e^2)(t)\|^2= ((L^*+L)(\e^1-\e^2)(t), (\e^1-\e^2)(t)) \, dB(t)
\]
\[
+ 2\, Re \, ((\e^1-\e^2)(t), G(\e^1(t))-G(\e^2(t)))\, dt
-2 \, Re \, (i(H_1-H_2)\e^2(t), (\e^1-\e^2)(t)) \, dt.
\]
Consequently,
\[
d \, \E \|(\e^1-\e^2)(t)\|^2
\le 2G_L  \E \|(\e^1-\e^2)(t)\|^2 \, dt
+ 2\|H_1-H_2\| \E (\|e^2(t)\| \, \|(\e^1-\e^2)(t)\|).
\]
Using Cauchy inequality and estimate \eqref{chisquare5}, yields the following estimate:
\[
d \, \E \|(\e^1-\e^2)(t)\|^2
\le 2G_L  \E \|(\e^1-\e^2)(t)\|^2 \, dt +  4\E \|(\e^1-\e^2)(t)\|^2 \, dt
+ 4\|H_1-H_2\|^2 \E \|e^2(t)\|^2 \, dt
\]
 \[
 \le (2G_L+4)  \E \|(\e^1-\e^2)(t)\|^2 \, dt +4\|H_1-H_2\|^2 e^{4 t \|L\|^2} \, dt.
\]
Consequently,
\[
\E \|(\e^1-\e^2)(t)\|^2 \le 4 \|H_1-H_2\|^2 \int_0^t \exp\{(20 \|L\|^2+4)(t-s)+4s\|L\|^2\} \, ds
\]
\begin{equation}
\label{eqinfdim12}
\le  4 t \|H_1-H_2\|^2  e^{(20 \|L\|^2+4) t}.
\end{equation}
Similarly one gets the estimate
\begin{equation}
\label{eqinfdim13}
\E \|(\e^1-\e^2)(t)\|^4 \le t C(t) \|H_1-H_2\|^2
\end{equation}
with an increasing continuous function $C(t)$ depending on $\|L\|$. Let us stress that the norms
of vectors $\e^j$ are of course the norms in the space  $l^2_{\HC}(\{p_k\})$.

Next, recall that
\[
\rho_j(t) =\frac{\sum_{k=1}^{\infty} p_k (e_k^j(t) \otimes \bar e_k^j(t))}{\sum_{k=1}^{\infty} p_k \|e_k^j(t)\|^2}.
\]
We need an estimate for $\|\rho_1-\rho_2\|$ in terms of $\|\e^1-\e^2\|$. In the calculations below we shall not
write argument $t$ explicitly.
Possibility to have identical estimates for the norms of $\HC^1_s$ and $\HC^2_s$ comes from the observation that
\[
\|x\otimes y\|_{\HC^1_s}={\tr}\, |x\otimes y |=\|x\otimes y\|_{\HC^2_s}=\sqrt{ {\tr} \, (x\otimes y)^2}=|(x,y)|.
\]
for any two vectors $x,y$ in a Hilbert space.

Consequently it follows that
\[
\|\sum_{k=1}^{\infty} p_k (e_k^1 \otimes  \bar e_k^1)
-\sum_{k=1}^{\infty} p_k (e_k^2 \otimes  \bar e_k^2)\|_{H_s^{1,2}}
\le
\|\e^1-\e^2\| (\|\e^1\|+\|\e^2\|),
\]
and therefore
\[
\|\rho_1-\rho_2\|_{\HC^{1,2}_s}
\le \frac{\|\e^1-\e^2\| (\|\e^1\|+\|\e^2\|)}{\|\e^1\|^2}
+\|\e_2\|^2 \left| \frac{1}{\|\e^2\|^2}-\frac{1}{\|\e^1\|^2}\right|.
\]
Since
\[
 \left| \frac{1}{\|\e^2\|^2}-\frac{1}{\|\e^1\|^2}\right|
 \le \frac{\|\e^2-\e^1\| (\|e^2\|+\|e^1\|)}{\|e^2\|^2\|e^1\|^2},
 \]
 it follows that
 \[
\|\rho_1-\rho_2\|_{\HC^{1,2}_s}
\le 2 \|\e^1-\e^2\| \frac{(\|\e^1\|+\|\e^2\|)}{\|\e^1\|^2}.
\]
Estimating $\|\e^2\|\le \|\e^1\|+\|\e^1-\e^2\|$ we derive further that
\[
\|\rho_1-\rho_2\|_{\HC^{1,2}_s}\le 4  \frac{\|\e^1-\e^2\|}{\|\e^1\|}
+ 2  \frac{\|\e^1-\e^2\|^2}{\|\e^1\|^2}.
\]
Consequently,
\[
\E \|\rho_1-\rho_2\|_{\HC^{1,2}_s}
\le 4  \sqrt{ \E \|\e^1-\e^2\|^2} \, \sqrt {\E \frac{1}{\|\e^1\|^2}}
+ 2  \sqrt{ \E\|\e^1-\e^2\|^4} \, \sqrt{ \E \frac{1}{\|\e^1\|^4}}.
\]
From \eqref{chisquare4} it follows that
\[
\E \frac{1}{\|\e^1\|^2}=1, \quad \E \frac{1}{\|\e^1\|^4} \le e^{4 t \|L\|^2}.
\]
Therefore, using \eqref{eqinfdim12} and \eqref{eqinfdim13} we obtain that
\[
\E \|\rho_1-\rho_2\|_{\HC^{1,2}_s}
\le \sqrt t C(t) \|H_1-H_2\|,
\]
which is exactly \eqref{Lindstochnorm12}.

\end{proof}

\section{Stochastic master equations for mean-field interacting particles}
\label{secMFeq}

In \cite{KolQuantLLN} and (for a special case) in \cite{KolQuantMFG}, the author derived the
effective quantum filtering equations for the quantum law of large number limit of interacting particles
under continuous measurement. As for the setting above, these equations can be written either for pure states
as an interacting particle extension of the Belavkin quantum filtering equation representing a new kind
 of stochastic nonlinear Schr\"odinger equation, or for mixed states as stochastic master equations for
 mean-field interacting particles, which can be looked at as an infinite-dimensional complex McKean-Vlasov
 diffusion in the space of positive trace-class operators. These limiting equations provide the forward
 part for the forward-backward system of equations governing the quantum mean-field games.
 In \cite{KolQuantLLN} the well-posedness of the limiting equations for pure states was established.
 Here we aim to establish the well-posedness for the limiting equations for mixed states.

 The stochastic master equations for mean-field interacting particles
 can be formally obtained by adding an interaction term into the Hamiltonian.
 Namely, equation \eqref{Lindstoch} enhanced by mean-field interaction takes the form

 \[
d\ga(t)=-i[H,\ga(t)] \, dt -i[A(\bar \eta(t)), \ga(t)] , dt +\LC_L \ga(t) \, dt
\]
 \begin{equation}
\label{LindstochnewBel}
+(L\ga(t)+\ga(t) L^*) dY(t), \quad \eta(t) =\E (\ga(t)/{\tr}\, \ga (t)).
\end{equation}
Here, $H$, $L$ are as above, $Y(t)$ is a simple $n$-dimensional Ito process,
the expectation $\E$ is with respect to $Y$ and
\[
A: \nu \to A(\nu)
\]
is a linear mapping in the space of bounded linear operators in $\HC$.
In the simplest case (bounded interactions) $A$ satisfies one of the two assumptions:
either $A$ is a
bounded linear mapping $\HC_s^2 \to \HC_s^2$ so that
\begin{equation}
\label{eqinterterm}
\|A(\nu)\|_{\HC^2_s} \le C_A \|\nu\|_{\HC^2_s}
\end{equation}
with a constant $C_A$, or $A$ is a bounded mapping from the trace-class operators to bounded
operators so that
\begin{equation}
\label{eqinterterm1}
\|A(\nu)\| \le C_A \, {\tr}\, |\nu| =C_A\|\nu\|_{\HC^1}
\end{equation}
with a constant $C_A$,

For instance, if $\HC$ is realised as the space $L^2(X, dx)$ of
square integrable functions on some Borel measure space $(X,dx)$,
$A$ satisfying \eqref{eqinterterm} can be given by an integral kernel
$A(x,y;x',y')$ so that, for $\nu\in \HC^2_s$ given by a kernel $\nu(x,y)$,
$A(\nu)$ is the integral operator in $L^2(X,dx)$ with the integral kernel
\[
A(\nu)(x;y)=\int_{X^2} A(x,y;x',y')\nu (y,y') \, dydy'.
\]
In this case
\[
C_A^2=  \int_{X^4} |A(x,y;x',y')|^2 dx dy dx'dy'.
\]
On the other hand, $A$ satisfying \eqref{eqinterterm1} can be given by a
bounded function $A(x,y)$ (interaction potential) so that, for $\nu \in \HC^1_s$ given by a kernel $\nu(x,y)$,
$A(\nu)$ is the operator of multiplication by the function
$\int A(x, y)\nu(y,y) \, dy$.
In this case
\[
C_A= \sup_{x,y} |A(x,y)|.
\]

Notice that \eqref{eqinterterm} implies \eqref{eqinterterm1} for $\nu$ of trace class. In fact,
this is clear for the case of ${\tr} \, \|\nu \| <1$, because in this case
\[
 \|\nu\|_{\HC^2_s} <  \|\nu\|_{\HC^1_s},
\]
and then extends to all trace-class operators by linearity.

Similarly, equation \eqref{Lindstochnorm1} enhanced by a mean-field interaction takes the form

\[
d\rho(t)=-i[H,\rho(t)] \, dt -i[A(\bar \eta(t)), \rho(t)] , dt +\LC_L \rho(t) \, dt
\]
\begin{equation}
\label{eqmainnonlinBel}
+[L\rho(t)+\rho(t) L^*-\rho(t)\, {\tr} \, (L\rho(t)+\rho(t) L^*) ] dB(t),
\quad \eta (t) =\E \rho (t),
\end{equation}
with a $n$-dimensional Brownian motion $B(t)$.

Equations \eqref{eqmainnonlinBel} and \eqref{LindstochnewBel} were derived
rigorously in \cite{KolQuantMFG} and \cite{KolQuantLLN} respectively, as mean-field limit of
continuously observed interacting particle systems. However, in these papers only solutions arising from pure states
(and thus given by the corresponding stochastic nonlinear Schrodinger equations) were discussed.

As in the case without interaction, the same link between equations \eqref{eqmainnonlinBel}
and \eqref{LindstochnewBel} holds. Namely, as one checks by Ito's formula, (i)
if $\ga(t)$ satisfies \eqref{LindstochnewBel}, then $\rho(t)=\ga(t)/{\tr}\, \ga(t)$ satisfies
 \eqref{eqmainnonlinBel}, with $B$ and $Y$ connected via \eqref{outputinnovation}, and (ii) if
 $\rho(t)$ satisfies \eqref{eqmainnonlinBel} and ${\tr} \, \ga(t)$ is chosen as a solution
 to \eqref{eqtrinnov}, then $\ga(t)={\tr}\, \ga (t) \rho(t)$ satisfies  \eqref{LindstochnewBel}.

Equations \eqref{eqmainnonlinBel} and \eqref{LindstochnewBel}
can be considered as infinite-dimensional
complex McKean-Vlasov diffusions on the space of positive trace-class operators
in $\HC$. As above, in order to avoid serious technical issues with Banach-space
valued SDEs, we work with this McKean-Vlasov SDEs as with SDEs in the Hilbert
space $\HC^2_s$ paying attention to the fact that the functional of taking trace
is not continuous in this space.

 \begin{theorem}
\label{mainMFmaster2}
Let $H$ be self-adjoint, $L$ bounded, $\rho_0$ a positive-definite operator of unit trace,
 $B(t)$ a Brownian motion and $A$ satisfy \eqref{eqinterterm} or \eqref{eqinterterm1}.
Then there exists a unique strong solution of equation \eqref{eqmainnonlinBel}
in $\HC^2_s$, with the initial data $\rho_0$
and such that all $\rho(t)$ are positive-definite trace class operators of unit trace.
\end{theorem}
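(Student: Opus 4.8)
The plan is to read \eqref{eqmainnonlinBel} as an infinite-dimensional McKean--Vlasov equation and to solve it by a fixed point argument on the deterministic curve $\eta(t)=\E\rho(t)$, using the well-posedness of Theorem \ref{LindStochnonLin1} and the continuous dependence on the Hamiltonian of Theorem \ref{LindStochnonLin2}. Fix a horizon $T>0$ and let $\MC_T$ be the set of continuous curves $\eta\colon[0,T]\to\HC^{1,2}_s$ with every $\eta(t)$ positive of unit trace; here and below $\HC^{1,2}_s$ is read as $\HC^2_s$ under \eqref{eqinterterm} and as $\HC^1_s$ under \eqref{eqinterterm1}. For $\eta\in\MC_T$ one has $\|\eta(t)\|_{\HC^{1,2}_s}\le1$, so $t\mapsto A(\bar\eta(t))$ is a measurable family of bounded operators uniformly bounded by $C_A$, which we take to be self-adjoint, as it must be for the right-hand side of \eqref{eqmainnonlinBel} to stay in $\HC^2_s$. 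The equation obtained from \eqref{eqmainnonlinBel} by replacing the self-consistent $\E\rho(t)$ with the given $\eta(t)$ is then an instance of \eqref{Lindstochnorm11} with the time-dependent bounded self-adjoint perturbation $A(\bar\eta(t))$ of $H$; the proof of Theorem \ref{LindStochnonLin1}, which proceeds via the interaction picture and the enhanced Hilbert space $l^2_{\HC}(\{p_k\})$ together with Lemma \ref{lemboundder}, carries over verbatim to measurable uniformly bounded time-dependent coefficients and supplies a unique strong solution $\rho^\eta(t)$ that is positive, trace-class and of unit trace for all $t$. Put $\Phi(\eta)(t)=\E\rho^\eta(t)$.

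First one checks that $\Phi$ maps $\MC_T$ into itself: $\E\rho^\eta(t)$ is a Bochner integral of positive operators of unit trace, hence is itself positive of unit trace with $\HC^1_s$-norm $1$, and continuity of $t\mapsto\E\rho^\eta(t)$ follows, by dominated convergence, from the $\HC^1_s$-path continuity of $\rho^\eta(\cdot)$, which in turn follows from the path continuity in $l^2_{\HC}(\{p_k\})$ of the driving process $\e(\cdot)$ of the proof of Theorem \ref{LindStochnonLin1}, the positivity of its norm process, and the series representation of $\rho^\eta$. The crucial point is the contraction estimate. For $\eta_1,\eta_2\in\MC_T$ the solutions $\rho^{\eta_1},\rho^{\eta_2}$ solve equations of the form \eqref{Lindstochnorm11} that differ only through the bounded self-adjoint perturbations $A(\bar\eta_1(t))$ and $A(\bar\eta_2(t))$, so the computation behind Theorem \ref{LindStochnonLin2} applies once its Gronwall step is run with the time-dependent difference kept inside the integral, i.e. with $\|A(\bar\eta_1(s))-A(\bar\eta_2(s))\|^2$ in place of $\|H_1-H_2\|^2$ in \eqref{eqinfdim12}--\eqref{eqinfdim13}; this gives
\[
\E\,\|\rho^{\eta_1}(t)-\rho^{\eta_2}(t)\|_{\HC^{1,2}_s}
\le C(T)\Big(\int_0^t \|A(\bar\eta_1(s))-A(\bar\eta_2(s))\|^2\,ds\Big)^{1/2},
\]
with $C(T)$ increasing and depending only on $\|L\|$. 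Bounding the operator norm of $A(\bar\eta_1(s))-A(\bar\eta_2(s))$ by $C_A\|\eta_1(s)-\eta_2(s)\|_{\HC^2_s}$ under \eqref{eqinterterm} and by $C_A\|\eta_1(s)-\eta_2(s)\|_{\HC^1_s}$ under \eqref{eqinterterm1}, and using $\|\E X\|\le\E\|X\|$ for Bochner integrals, one obtains
\[
\|\Phi(\eta_1)(t)-\Phi(\eta_2)(t)\|_{\HC^{1,2}_s}^2
\le C_A^2\,C(T)^2\int_0^t \|\eta_1(s)-\eta_2(s)\|_{\HC^{1,2}_s}^2\,ds
\]
for all $t\le T$, in the norm dictated by the assumption at hand.

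Iterating this inequality gives $\sup_{t\le T}\|\Phi^n(\eta_1)(t)-\Phi^n(\eta_2)(t)\|_{\HC^{1,2}_s}^2\le\frac{(C_A^2 C(T)^2 T)^n}{n!}\,\sup_{t\le T}\|\eta_1(t)-\eta_2(t)\|_{\HC^{1,2}_s}^2$, so a sufficiently high power of $\Phi$ is a strict contraction on the closed set $\MC_T\subset C([0,T],\HC^{1,2}_s)$ and $\Phi$ has a unique fixed point $\eta^*$. Setting $\rho(t)=\rho^{\eta^*}(t)$, the identity $\E\rho(t)=\Phi(\eta^*)(t)=\eta^*(t)$ shows that $A(\bar\eta^*(t))$ is precisely the genuine mean-field coefficient, so $\rho$ is a strong solution of \eqref{eqmainnonlinBel} with the asserted positivity, unit-trace and trace-class properties; conversely, any such solution $\rho$ has $\E\rho\in\MC_T$ equal, by uniqueness in Theorem \ref{LindStochnonLin1} applied with the frozen coefficient $A(\E\rho(t))$, to a fixed point of $\Phi$, hence to $\eta^*$, whence $\rho=\rho^{\eta^*}$. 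As $T$ was arbitrary, this yields global existence and uniqueness. I expect the main obstacle, and essentially the only work beyond bookkeeping, to lie in the second step: verifying that the stability estimate of Theorem \ref{LindStochnonLin2}, proved there only for constant Hamiltonian perturbations, survives the passage to merely measurable uniformly bounded \emph{time-dependent} perturbations, and does so in exactly the norm --- $\HC^1_s$ under \eqref{eqinterterm1}, $\HC^2_s$ under \eqref{eqinterterm} --- in which $A$ is assumed contractive, so that the McKean--Vlasov coupling closes. It is precisely for this that \eqref{Lindstochnorm12} was recorded in both $\HC^1_s$ and $\HC^2_s$.
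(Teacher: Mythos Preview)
Your proposal is correct and follows essentially the same approach as the paper: a Banach fixed-point argument on the deterministic curve $\eta=\E\rho$, using Theorem~\ref{LindStochnonLin1} for well-posedness with frozen $\eta$ and the stability estimate \eqref{Lindstochnorm12} of Theorem~\ref{LindStochnonLin2} for the contraction, with the paper opting for small-time contraction plus iteration rather than your $\Phi^n$ iteration. Your version is in fact slightly more careful than the paper's in distinguishing the $\HC^1_s$ and $\HC^2_s$ norms according to whether \eqref{eqinterterm} or \eqref{eqinterterm1} holds, and in verifying that $\Phi$ preserves the curve space.
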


\begin{proof}
We shall deal with the case $H=0$. With this assumption we are not loosing generality,
because otherwise changing $\rho$ to the new variable  $\mu(t)=e^{-iHt}\rho(t) e^{iHt}$
we reduce the story to vanishing $H$ with time-dependent $L$ and $A$. Such time dependence does not
affect the proof in any way. The same strategy was used in Theorem \ref{LindStochLin1} above.


Let $C_{\rho_0}^{1+}([0,T], \HC^2_s)$ be the space of continuous mapping
$\eta: [0,T] \to \HC^2_s$ such that $\eta(0)=\rho_0$ and all $\eta(t)$ are positive
trace class operators of trace not exceeding $1$. It is not difficult to see that
$C_{\rho_0}^{1+}([0,T], \HC^2_s)$ is a complete metric space,
considered as a closed subset of the Banach space of curves in $\HC^2_s$ with the norm
$\sup_{t\in [0,T]} \|\eta(t)\|_{\HC^2_s}$.

Let us define the mapping
\[
\Phi: C_{\rho_0}^{1+}([0,T], \HC^2_s)
\to C_{\rho_0}^{1+}([0,T], \HC^2_s)
\]
by the following rule. To an $\eta \in C_{\rho_0}^{1+}([0,T], \HC^2_s)$
let us assign the solution  of equation
 \[
d r(t)=-i[H,r(t)] \, dt-i[A(\bar \eta(t)), r(t)] , dt
+\LC_L r(t) \, dt
\]
\begin{equation}
\label{eqmainnonlinBela}
+[r(t) L^*+L r(t)-r(t) \, {\tr} (r(t)(L+L^*))] \, dB(t),
\end{equation}
with the initial condition $r_0=\rho_0$ and then define $(\Phi (\eta))(t)= \E r(t)$.
Clearly, $\rho(t)$ is the solution of the Cauchy problem for equation
\eqref{eqmainnonlinBel} with the initial data $\rho_0$ if and only if
$\eta=\E \rho$ is a fixed point of the mapping $\Phi$.

By \eqref{Lindstochnorm12} and \eqref{eqinterterm1},
\[
\| \E \, r_1-\E \, r_2\|_{\HC^1_s}= {\tr}\, |\E \, r_1-\E \, r_2|\le {\tr}\, \E |r_1-r_2|
\]
\[
\le
\sqrt t C(t) \|A(\eta_1)-A(\eta_2)\|\le \sqrt t C(t) C_A \|\eta_1-\eta_2\|_{\HC^1_s}.
\]
Hence, for sufficiently small $t$, the mapping $\Phi$ is a contraction and thus
has a unique fixed point. As usual, existence and uniqueness extends to arbitrary $t$
by iteration.

\end{proof}

\begin{theorem}
\label{mainMFmaster1}
Let $H$ be self-adjoint, $L$ bounded, $\rho_0$ a positive-definite operator of unit trace,
$Y(t)$ a Brownian motion and $A$ satisfy \eqref{eqinterterm} or \eqref{eqinterterm1}.
Then there exists a unique strong solution of equation \eqref{LindstochnewBel}
in $\HC^2_s$, with the initial data $\rho_0$
and such that all $\rho(t)$ are positive-definite trace class operators.
\end{theorem}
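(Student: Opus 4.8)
The plan is to mirror the proof of Theorem \ref{mainMFmaster2}, but to run a Picard fixed point for the \emph{linear} mean-field equation \eqref{LindstochnewBel} directly in the probabilistic setting in which $Y$ is a Brownian motion. As in that proof, I would first reduce to $H=0$ by passing to $\mu(t)=e^{-iHt}\ga(t)e^{iHt}$, which converts a self-adjoint $H$ into a vanishing one at the price of time-dependent but uniformly bounded families $L$ and $A$; this affects no estimate. I would then work in the complete metric space $C^{1+}_{\rho_0}([0,T],\HC^2_s)$ introduced in the proof of Theorem \ref{mainMFmaster2} and define a map $\Phi$ by assigning to $\eta\in C^{1+}_{\rho_0}([0,T],\HC^2_s)$ the curve $\Phi(\eta)(t)=\E\bigl(\ga^\eta(t)/\tr\,\ga^\eta(t)\bigr)$, where $\ga^\eta$ is the unique strong solution, with $Y$ Brownian and $\ga^\eta(0)=\rho_0$, of
\[
d\ga(t)=-i[A(\bar\eta(t)),\ga(t)]\,dt+\LC_L\ga(t)\,dt+(L\ga(t)+\ga(t)L^*)\,dY(t).
\]
This is an equation of type \eqref{Lindstoch} with the time-dependent bounded self-adjoint Hamiltonian $A(\bar\eta(t))$ — bounded by $C_A$ because $\tr|\eta(t)|=\tr\,\eta(t)\le 1$, using \eqref{eqinterterm1}, and recalling that \eqref{eqinterterm} implies \eqref{eqinterterm1} on trace-class operators — so Theorem \ref{LindStochLin1} applies: it produces $\ga^\eta$, keeps it positive-definite and of trace class, and makes $\tr\,\ga^\eta(t)$ a positive martingale starting from $1$. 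Since $\ga^\eta(t)/\tr\,\ga^\eta(t)$ is positive of trace $1$, $\Phi(\eta)(t)$ is positive of trace $1$ and (by dominated convergence, the integrand being bounded by $1$ in $\HC^2_s$ and $t$-continuous a.s.) $t$-continuous; hence $\Phi$ maps $C^{1+}_{\rho_0}([0,T],\HC^2_s)$ into itself, and a curve $\eta$ solves the fixed-point equation precisely when $\ga^\eta$ solves \eqref{LindstochnewBel}.

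The heart of the argument is the contraction estimate for $\Phi$, and here I would exploit the link with pure states built above. By Proposition \ref{linnonlin}, the normalised processes $\rho^{\eta_j}(t)=\ga^{\eta_j}(t)/\tr\,\ga^{\eta_j}(t)$ solve the \emph{nonlinear} master equations \eqref{Lindstochnorm11} with the same initial datum $\rho_0$ and the time-dependent Hamiltonian perturbations $A(\bar\eta_j(t))$. Theorem \ref{LindStochnonLin2} is stated for constant perturbations, but its proof uses the difference of the Hamiltonians only through the single extra term $-i(H_1-H_2)\e^2(t)\,dt$ in the $l^2_{\HC}(\{p_k\})$-representation, so it carries over verbatim with $\|H_1-H_2\|$ replaced by $\sup_{s\le t}\|H_1(s)-H_2(s)\|$. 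Combining this with $\|A(\bar\eta_1(s))-A(\bar\eta_2(s))\|\le C_A\|\eta_1(s)-\eta_2(s)\|_{\HC^1_s}$ gives
\[
\|\Phi(\eta_1)(t)-\Phi(\eta_2)(t)\|_{\HC^1_s}\le\E\,\|\rho^{\eta_1}(t)-\rho^{\eta_2}(t)\|_{\HC^1_s}\le\sqrt t\,C(t)\,C_A\,\sup_{s\le t}\|\eta_1(s)-\eta_2(s)\|_{\HC^1_s},
\]
which, exactly as in Theorem \ref{mainMFmaster2}, is a contraction in the relevant complete $\HC^1_s$-sup metric for $T$ small. The fixed point then extends to all times by the usual iteration, and $\ga(t):=\ga^\eta(t)$ is the desired strong solution of \eqref{LindstochnewBel}. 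Uniqueness is immediate: any solution $\ga$ produces the fixed point $\eta=\E(\ga/\tr\,\ga)$, which is unique, and $\ga$ is then the unique solution of the $\eta$-frozen linear SDE; positive-definiteness and the trace-class property are inherited from Theorem \ref{LindStochLin1}.

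The point I expect to require the most care is the choice of probabilistic setting, rather than any single estimate. One is tempted to deduce the result from Theorem \ref{mainMFmaster2} by transporting its solution through the correspondence $\rho=\ga/\tr\,\ga$, but this correspondence involves Girsanov's change of measure with density $\tr\,\ga(t)$, whereas the mean-field coupling $\eta=\E(\ga/\tr\,\ga)$ in \eqref{LindstochnewBel} is imposed under the measure for which $Y$ is Brownian; under that measure the normalised process carries a different, non-closed mean-field term, so the correspondence does not pair the two mean-field problems. This is precisely why the fixed point must be run directly for the $Y$-Brownian linear equation, as above. The remaining ingredients — the time-dependent extensions of Theorems \ref{LindStochLin1} and \ref{LindStochnonLin2}, the strict positivity of $\tr\,\ga^\eta(t)$ (which follows from the pure-state representation, each $\|e_k(t)\|^2$ being a.s. positive by \eqref{chisquare4}), and the $t$-continuity of $\Phi(\eta)$ — are routine.
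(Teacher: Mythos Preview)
Your overall strategy is the paper's: reduce to $H=0$, set up the fixed-point map $\Phi(\eta)(t)=\E\bigl(\ga^{\eta}(t)/\tr\,\ga^{\eta}(t)\bigr)$ with $\ga^{\eta}$ solving the $\eta$-frozen linear equation driven by the Brownian motion $Y$, and derive the contraction from the normalized-state comparison of Theorem~\ref{LindStochnonLin2}. The gap is in the measure bookkeeping at exactly the step you call routine. The estimate \eqref{Lindstochnorm12} is proved under the expectation in which $B$ is Brownian: the final lines of that proof use $\E\,1/\|\e^1\|^2=1$ and $\E\,1/\|\e^1\|^4\le e^{4t\|L\|^2}$, both coming from \eqref{chisquare4}, which is a $B$-martingale identity. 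In your setting the expectation defining $\Phi$ is $\E_Y$; moreover the two normalized processes $\rho^{\eta_1},\rho^{\eta_2}$ are driven by \emph{different} innovation processes $B_1\neq B_2$, so they are not even a pair of solutions of \eqref{Lindstochnorm11} with a common $B$, and your claim that the proof ``carries over verbatim'' is not correct as stated. The paper closes precisely this gap by Girsanov: the drift relating $B$ and $Y$ is bounded by $2\|L\|$, so the Radon--Nikodym density has bounded second moment and the $\E_B$-estimate transfers to an $\E_Y$-estimate up to a constant. You discuss Girsanov, but only to rule out a direct transfer from Theorem~\ref{mainMFmaster2}; you miss that it is needed inside your own contraction step.

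The fix is either to insert the Girsanov step as the paper does, or---cleaner in your setup---to run the $l^2_{\HC}(\{p_k\})$ argument with the common noise $Y$ rather than $B$: then $\e^j$ solves the \emph{linear} equation $d\e^j=(-iA(\bar\eta_j)\e^j-\tfrac12 L^*L\,\e^j)\,dt+L\e^j\,dY$, the difference equation has no nonlinear $G$-term, $\E_Y\|\e^j\|^2=1$ since $\|\e^j\|^2$ is a $Y$-martingale by \eqref{chisquare}, and the required inverse-norm bound $\E_Y\,1/\|\e^j\|^2\le e^{4t\|L\|^2}$ follows from \eqref{chisquare1} by Gronwall. This is the ``slight increase in the length of the calculations'' the paper alludes to in the remark after the proof. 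The remaining pieces of your argument (time-dependent Hamiltonians, strict positivity of $\tr\,\ga^{\eta}$, continuity of $\Phi(\eta)$, iteration to all $T$) are fine.
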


\begin{proof}
As above, we can and will choose $H=0$.
The space $C_{\rho_0}^{1+}([0,T], \HC^2_s)$ is also defined as above.

Let us define the mapping
\[
\Phi: C_{\rho_0}^{1+}([0,T], \HC^2_s)
\to C_{\rho_0}^{1+}([0,T], \HC^2_s)
\]
by the following rule. To an $\eta \in C_{\rho_0}^{1+}([0,T], \HC^2_s)$
let us assign the solution  of equation
 \[
d r(t)=-i[H,r(t)] \, dt-i[A(\bar \eta(t)), r(t)] , dt
+\LC_L r(t) \, dt
\]
\begin{equation}
\label{LindstochnewBela}
+[r(t) L^*+L r(t)] \, dY(t),
\end{equation}
with the initial condition $r_0=\rho_0$ and then define
$(\Phi (\eta))(t)= \E \, (r(t)/{\tr} \, r(t))$.
Clearly, $\rho(t)$ is the solution of the Cauchy problem for equation
\eqref{LindstochnewBel} with the initial data $\rho_0$ if and only if
$r(t)$ solves \eqref{LindstochnewBela} and $\eta$ is a fixed point of
the mapping $\Phi$.

As in the proof of the previous theorem we obtain that
\[
\| \E_B \, \frac{r_1}{{\tr}\, r_1}- \E_B \,  \frac{r_2}{{\tr}\, r_2}\|_{\HC^1_s}
\le {\tr}\, \E_B \, \left|\frac{r_1}{{\tr}\, r_1}-\frac{r_2}{{\tr}\, r_2}\right|
\]
\[
\le
\sqrt t C(t) \|A(\eta_1)-A(\eta_2)\|\le \sqrt t C(t) C_A \|\eta_1-\eta_2\|_{\HC^1_s}.
\]
The only problem is that the expectation here is with respect to the Brownian motion $B$
(which we stress by writing $\E_B$)
linked with $Y$ in the usual way, and not with respect to $Y$ itself, as it should be.
However, by Girsanov's theorem,
 expectation with respect to $Y$ and $B$ are linked by a Radon-Nikodyme derivative with all coefficients
 uniformly bounded and thus with bounded second moment. Therefore, from the estimates
 for $\E_B$ we get similar estimates with respect to the expectation $E_Y$,
where $Y$ is a Brownian motion. The proof is again completed by the fixed-point principle.
\end{proof}

\begin{remark} By a slight increase in the length of the calculations one can avoid Girsanov's theorem
and even prove both previous theorems for arbitrary Ito's processes $Y$ and $B$.
\end{remark}

\section{Appendix: some trace inequalities}

\begin{theorem}
If $A$ is a self-adjoint Hilbert-Schmidt operator and $B$ a bounded operator, then
\begin{equation}
\label{eqmyineqtr}
2|{\tr} \, (ABAB^*)|\le {\tr} \, [A^2(BB^*+B^*B)],
\end{equation}
and
\begin{equation}
\label{eqmyineqtr1}
|{\tr} \, (AB AB +AB^*AB^*)|\le {\tr} \, [A^2 (BB^*+B^*B)].
\end{equation}
\end{theorem}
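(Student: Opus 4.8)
The plan is to deduce both inequalities from the Cauchy--Schwarz inequality for the Hilbert--Schmidt inner product $\lan X,Y\ran={\tr}(X^*Y)$ together with cyclicity of the trace. First I would record the bookkeeping that makes all the traces legitimate: since $A\in\HC^2_s$ and $B$ is bounded, each of $AB$, $BA$, $AB^*$, $B^*A$ lies in $\HC^2$, hence products of any two of them (such as $ABAB$, $ABAB^*$) are trace class, and $A^2$ is trace class, so $A^2(BB^*+B^*B)$ is trace class; in particular the trace is cyclic on all products that occur below. I would also use the elementary facts $\|X\|_{\HC^2}=\|X^*\|_{\HC^2}$ and, using the self-adjointness of $A$, the identities $\|AB\|_{\HC^2}^2={\tr}(B^*A^2B)={\tr}(A^2BB^*)$ and $\|AB^*\|_{\HC^2}^2={\tr}(BA^2B^*)={\tr}(A^2B^*B)$.

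For \eqref{eqmyineqtr} I would write ${\tr}(ABAB^*)={\tr}\big((AB)(AB^*)\big)=\lan (AB)^*,\,AB^*\ran$, so by Cauchy--Schwarz and then the arithmetic--geometric mean inequality
\[
|{\tr}(ABAB^*)|\le \|(AB)^*\|_{\HC^2}\,\|AB^*\|_{\HC^2}=\|AB\|_{\HC^2}\,\|AB^*\|_{\HC^2}\le \tfrac12\big(\|AB\|_{\HC^2}^2+\|AB^*\|_{\HC^2}^2\big),
\]
and substituting the two norm identities turns the right-hand side into $\tfrac12\,{\tr}[A^2(BB^*+B^*B)]$, which is \eqref{eqmyineqtr}.

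For \eqref{eqmyineqtr1} I would bound the two summands separately: ${\tr}(ABAB)=\lan (AB)^*,\,AB\ran$ gives $|{\tr}(ABAB)|\le\|(AB)^*\|_{\HC^2}\|AB\|_{\HC^2}=\|AB\|_{\HC^2}^2={\tr}(A^2BB^*)$, and symmetrically $|{\tr}(AB^*AB^*)|\le\|AB^*\|_{\HC^2}^2={\tr}(A^2B^*B)$; adding these and using the triangle inequality yields \eqref{eqmyineqtr1}. (If one prefers, one can first observe ${\tr}(AB^*AB^*)=\overline{{\tr}(ABAB)}$ by taking adjoints and using cyclicity, so that the left-hand side of \eqref{eqmyineqtr1} equals $2\,{\rm Re}\,{\tr}(ABAB)$, but this is not needed.) There is no genuinely hard step here; the only point requiring care is the one flagged above, namely verifying that every operator to which cyclicity of the trace or the Hilbert--Schmidt Cauchy--Schwarz inequality is applied is of the appropriate trace ideal, given that $A$ is merely Hilbert--Schmidt and $B$ merely bounded.
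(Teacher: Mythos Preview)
Your proof is correct and in fact cleaner than the paper's. The paper first reduces to finite dimensions by approximation, diagonalises $A$ so that it has real diagonal entries $a_i$, writes both sides of each inequality explicitly as sums over matrix entries of $B$, and then reduces everything to the scalar inequality $2|a_ia_j|\le a_i^2+a_j^2$. You instead work directly in the Hilbert--Schmidt framework, applying the Cauchy--Schwarz inequality to pairings like $\lan (AB)^*,AB^*\ran$ and $\lan (AB)^*,AB\ran$ and then the arithmetic--geometric mean; this is coordinate-free, avoids the approximation step and the explicit index bookkeeping, and also makes transparent why self-adjointness of $A$ is the only structural hypothesis actually used (it enters solely through $A^*A=A^2$ in the norm identities). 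The paper's route has the minor virtue of being entirely elementary once one accepts the approximation, but your argument is both shorter and intrinsically infinite-dimensional. Your attention to the trace-ideal bookkeeping is appropriate and your use of cyclicity (${\tr}(B^*A^2B)={\tr}(A^2BB^*)$ with $A^2B$ trace class and $B^*$ bounded, etc.) is legitimate.
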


\begin{proof}
by approximation it is reduced to finite-dimensional situation. The diagonalization procedure reduces
the problem to the case when $A$ is a diagonal matrix with real numbers $a_i$ on the diagonal. Then
 \[
 2{\tr} \, (ABAB^*)
=2\sum a_ib_{ij}a_j\bar b_{ij}=\sum a_i a_j (|b_{ij}|^2+|b_{ji}|^2)
\]
\[
=2\sum_i a_i^2 |b_{ii}|^2+2\sum_{i<j}a_ia_j(|b_{ij}|^2+|b_{ji}|^2).
\]
The r.h.s. of \eqref{eqmyineqtr} equals
\[
\sum a_i^2 (|b_{ij}|^2+|b_{ji}|^2)
=2\sum_i a_i^2 |b_{ii}|^2+ \sum_{i < j}(a_i^2+a_j^2)(|b_{ij}|^2+|b_{ji}|^2).
\]
Thus \eqref{eqmyineqtr} holds, because $2|a_ia_j| \le a_i^2 +a_j^2$.

Inequality \eqref{eqmyineqtr1} rewrites as
\[
2\sum_i a_i^2 |Re (b_{ii}^2)|+4\sum_{i<j}|a_ia_j| \, |Re (b_{ij} b_{ji})|
\le 2\sum_i a_i^2 |b_{ii}|^2+ \sum_{i < j}(a_i^2+a_j^2)(|b_{ij}|^2+|b_{ji}|^2),
\]
which easily seen to hold.
\end{proof}

In particular, for self-adjoint $B$ it follows that

\begin{equation}
\label{eqmyineqtr2}
|{\tr} \, (ABAB)|\le {\tr} \, (A^2 B^2).
\end{equation}

\end{document}